% Modif. mai 4, 2014

\documentclass[12pt, twoside]{article}
\usepackage{amsmath,amsthm,amssymb}
\usepackage{times}
\usepackage{enumerate}

\pagestyle{myheadings}

\def\subjclass#1{{\renewcommand{\thefootnote}{}%
\footnote{\hspace{-0.6cm}\emph{Mathematics Subject Classification
(2010):} #1}}}
\def\subj#1{{\renewcommand{\thefootnote}{}%
\footnote{\hspace{-0.6cm}\emph{keywords:} #1}}}
\def\sqw{\hbox{\rlap{\leavevmode\raise.3ex\hbox{$\sqcap$}}$%
\sqcup$}}
\def\sqb{\hbox{\hskip5pt\vrule width4pt height6pt depth1.5pt%
\hskip1pt}}

\newtheorem{theorem}{Theorem}[section]
\newtheorem{corollary}[theorem]{Corollary}
\newtheorem{proposition}[theorem]{Proposition}
\newtheorem{lemma}[theorem]{Lemma}
\numberwithin{equation}{section}

%% An unnumbered remark:

\markboth{B. Saoudi and al}{ON FACTORIZATION OF $p$-ADIC MEROMORPHIC FUNCTIONS}

%% Equations numbered by section:

\numberwithin{equation}{section}

%%%%%%%%%%%
\frenchspacing

\textwidth=14cm \textheight=19.5cm
\parindent=16pt
\oddsidemargin=1cm
\evensidemargin=1cm
\topmargin=1cm

%%%%%%%%%%%%%%%%%%%%%%%%%%%%%%%%%%%
%%%%%%%%%%%%%%%%%%%%%%%%%%%%%%%%%%%

%%%%%%%%%%%%%

\begin{document}

%%%%%

\baselineskip=17pt

%%%%%%%%%%%%%%%%

\title{\bf\textsc{ON FACTORIZATION OF $p$-ADIC MEROMORPHIC FUNCTIONS}}

\author{B. SAOUDI, A. BOUTABAA and T. ZERZAIHI }

\date{February 6, 2019}

\maketitle

\subjclass{Primary 30D05, Secondary 30D35.} \subj{p-adic meromorphic function, primeness , right-primeness, left-primeness}

\begin{abstract}
In this paper, we study  primeness and pseudo primeness of $p$-adic meromorphic functions. We also consider left (resp. right ) primeness of these functions. We give, in particular, sufficient conditions for a meromorphic function to satisfy such properties. Finally, we consider the problem of permutability of entire functions.    
\end{abstract}

\section{Introduction }

For every prime number $p$, we denote by $\mathbb{Q}_{p}$ the field of $p$-adic numbers, and we denote by $\mathbb{C}_{p}$ the completion of an algebraic closure of $\mathbb{Q}_{p}$, which is endowed with the usual $p$-adic absolute value. \ Given $a\in\mathbb{C}_{p}$ and $r>0$, \ $d(a,r)$ and $d(a,r^-)$ are respectively the disks $\{x\in\mathbb{C}_{p} \ / \ |x-a|\leq r\}$ and $\{x\in\mathbb{C}_{p} \ / \ |x-a|< r\}$; and  $C(a,r)$ is the circle  $\{x\in\mathbb{C}_{p} \ / \ |x-a|= r\}=d(a,r)\setminus d(a,r^-)$. It is easily seen that,  two disks have a non-empty intersection if and only if they are nested (i.e, one of them is contained in the other).

%More generally, in the sequel, $\mathbb{K}$ is a complete ultrametric algebraically closed field.

We denote by $\mathcal{A}(\mathbb{C}_{p})$ the $\mathbb{C}_{p}$-algebra of entire functions in $\mathbb{C}_{p}$ and $\mathcal{M}(\mathbb{C}_{p})$ the field of meromorphic functions in $\mathbb{C}_{p}$, i.e, the field of fractions of $\mathcal{A}(\mathbb{C}_{p})$. \ Let $f(x)=\sum_{n\geq 0}a_{n}x^{n} $ be a $p$-adic entire function. For all $r>0$, we denote by $|f|(r)=\max_{n\geq 0} |a_{n}|r^{n}$, the maximum modulus of $f$. This is extended to meromorphic functions $h=f/g$ by $|h|(r)=|f|(r)/|g|(r)$. 
\medskip

An element $f\in \mathcal{A}(\mathbb{C}_{p})$ (resp. $f\in \mathcal{M}(\mathbb{C}_{p}) $)  is said to be \emph{transcendental} if it is not a polynomial (resp. \ rational function). \ Thus, a $p$-adic transcendental meromorphic function admits  infinitely many zeros or  infinitely many poles or both. It should also be noted  that a transcendental entire function $f$ has no \emph{exceptional value}  or  \emph{Picard value} in $\mathbb{C}_{p}$,\ so for every 
 $\beta \in \mathbb{C}_{p}$, \ the function $f-\beta$ has infinitely many zeros.
\medskip

Recall also that, if $D$ is a disk and $f$ an analytic function in $D$, then $f(D)$ is  a disk. Moreover if $f^{'}$ has no zero in $D$, there exist a disk $d\subset D$ such that the restiction $f_{\mid d}:d\longrightarrow f(d)$  is bi-analytic. This means that $f_{\mid d}$ is an analytic bijection and that its reciprocal function $f_{\mid d}^{-1}$ is also analytic $[4]$.  \medskip

Let $F, f, g$ be $p$-adic meromorphic functions such that $F=f\circ g$. \ We say that $f$ and $g$ are respectively  left and right factors of $F$. The function $F$ is said to be prime (resp. pseudo-prime, resp. left-prime, resp. right-prime)  if every factorization of $F$ of the above form implies that either $f$  or $g$ is a linear rational function (resp. $f$ or $g$ is  a rational function, resp. $f$ is  a linear rational function  whenever $g$ is transcendental, $g$  is linear whenever $f$ is transcendental). If the factors are restricted to entire functions, the factorization is said to be in the entire sense. \ The article by B\'ezivin and Boutabaa [2] is to our knowledge the only work dedicated to the study of factorization of  $p$-adic meromorphic functions. They show, in particular, that if an entire function $F$ is prime (resp pseudo-prime) in $\mathcal{A}(\mathbb{C}_{p})$, then $F$ is prime (resp pseudo-prime) in $\mathcal{M}(\mathbb{C}_{p})$. This is false in the field $\mathbb{C}$ of complex numbers. \ Indeed, Ozawa in [6] gives examples of complex entire functions that are prime in $\mathcal{A}(\mathbb{\mathbb{C}})$ without being prime in $\mathcal{M}(\mathbb{\mathbb{C}})$.
 
 In this article, we provide sufficient conditions for a $p$-adic meromorphic function to be prime or pseudo-prime.  We give examples of meromorphic functions satisfying these conditions.
 
  We also show that almost all $p$-adic transcendental entire functions are prime, in the sense that it is most often enough to add or multiply these functions by an affinity to obtain a prime entire  function.  

Finally, we briefly discuss the question of permutability of entire functions. Or, in other words, one wonders: when do we have $ f \circ g = g \circ f $ for $p$-adic entire functions $f$ and $g$? \ 

Our method is based on the distribution of zeros and poles of the considered functions  as well as the properties of their maximum modulus.

\medskip
      
\section{PRIMALITY AND  PSEUDO PRIMALITY}
Let us first prove the following result which gives sufficient conditions for a $p$-adic meromorphic function to be pseudo-prime.

\begin{theorem} 
	 Let $F$ be a  transcendental  meromorphic function in $\mathbb{C}_{p}$ whose all poles are simple except a finite number of them. Suppose moreover that, for every $\beta\in\mathbb{C}_{p} $, all the zeros of the function $F-\beta$ are simple except a finite number of them. Then $F$ is pseudo-prime. 
\end{theorem}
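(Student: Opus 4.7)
The plan is to argue by contradiction: assume $F = f \circ g$ with $f, g \in \mathcal{M}(\mathbb{C}_p)$ both transcendental, and derive a contradiction from the two multiplicity hypotheses on $F$. Everything will rest on the product formula for multiplicities in a composition. If $z_0 \in \mathbb{C}_p$ is such that $c_0 = g(z_0)$ is finite and not a pole of $f$, then setting $\beta = F(z_0) = f(c_0)$, the order of $z_0$ as a zero of $F - \beta$ equals $m \cdot k$, where $m$ is the order of $c_0$ as a zero of $f - \beta$ and $k$ is the order of $z_0$ as a zero of $g - c_0$. Analogously, if $c_0$ is a pole of $f$ of order $m$ and $z_0$ is a zero of $g - c_0$ of order $k$, then $z_0$ is a pole of $F$ of order $mk$.

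The first step transfers the pole hypothesis from $F$ to $f$. If $b$ were a pole of $f$ of order $\geq 2$, every point of $g^{-1}(b)$ would be a multiple pole of $F$; since the multiple poles of $F$ are finite in number by hypothesis, $g^{-1}(b)$ is finite, so $b$ is a Picard-exceptional value of $g$. By the $p$-adic Picard-type theorem for transcendental meromorphic functions, the set $E(g)$ of such exceptional values is finite, and hence $f$ has only finitely many poles of order $\geq 2$. Applying the same idea to the zero hypothesis: for each $\beta \in \mathbb{C}_p$, any multiple zero $c_0$ of $f - \beta$ satisfies $g^{-1}(c_0) \subset \{\text{multiple zeros of } F-\beta\}$, which is finite, so again $c_0 \in E(g)$. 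Taking the union over $\beta$ yields $\{c \in \mathbb{C}_p : f'(c) = 0\} \subset E(g)$, whence $f'$ has only finitely many zeros.

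To close the argument I would invoke the fact that a transcendental meromorphic function on $\mathbb{C}_p$ must have $f'$ with infinitely many zeros. When $f$ is entire and transcendental this is immediate, since $f'$ is then an entire transcendental function and a $p$-adic entire function with finitely many zeros is a polynomial. In the general meromorphic case, writing $f = u/v$ in lowest terms with $u, v \in \mathcal{A}(\mathbb{C}_p)$, one examines the Wronskian $u'v - uv'$, whose zeros correspond to the critical points of $f$: either the Wronskian is transcendental, contributing infinitely many zeros directly, or it is a polynomial, in which case the residue constraints imposed by the absence of a global $p$-adic logarithm (an antiderivative of $f'$ can be meromorphic only if all residues at zeros of $v$ vanish) force $f$ itself to be rational. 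The main obstacle I anticipate is precisely this last step in the meromorphic case: rigorously excluding the polynomial-Wronskian scenario for transcendental $f \in \mathcal{M}(\mathbb{C}_p)$, since it mixes $p$-adic integration theory (residue vanishing) with the structural form of $u$ and $v$. The earlier reductions are mechanical once the product formula is set up; the delicate point is closing the loop from ``$f'$ has only finitely many zeros'' to ``$f$ is rational''.
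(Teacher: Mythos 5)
Your reductions are sound: the product formula for multiplicities in a composition, the transfer of both hypotheses from $F$ to $f$ via the fact that a transcendental meromorphic $g$ has only finitely many (in fact at most one) finite quasi-exceptional values, and the resulting conclusion that $f$ has finitely many multiple poles and $f'$ finitely many zeros are all correct. The genuine gap is exactly where you flag it: passing from ``$f'$ has finitely many zeros and $f$ has finitely many multiple poles'' to ``$f$ is rational.'' The route you sketch (residues and the absence of a global $p$-adic logarithm) is not developed and is not the right tool. The gap does close, but with a result the paper already uses elsewhere: write $f=u/v$ in lowest terms and set $W=u'v-uv'$. A zero of $v$ of order $k$ is a zero of $W$ of order exactly $k-1$ (since $u$ does not vanish there and $\mathbb{C}_p$ has characteristic $0$), and away from the zeros of $v$ one has $f'=W/v^2$, so the zeros of $W$ are precisely the zeros of $f'$ together with the multiple zeros of $v$. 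Under your conclusions $W$ therefore has finitely many zeros; since a $p$-adic entire function with finitely many zeros is a polynomial, $W$ is a nonzero polynomial, and B\'ezivin's Wronskian theorem (Lemma 2.8 of the paper) forces $u$ and $v$ to be polynomials, i.e. $f$ rational --- the desired contradiction. Without citing that theorem (or reproving it), your argument does not terminate.

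For comparison, the paper runs the same pull-back idea but organizes it to need much less. It first invokes Lemma 2.3 to conclude that $g$ is \emph{entire}; a transcendental entire function on $\mathbb{C}_p$ has no exceptional values at all, so a \emph{single} zero $\alpha$ of $W$ already pulls back to infinitely many multiple poles of $F$ (when $\alpha$ is a zero of the denominator) or infinitely many multiple zeros of $F-f(\alpha)$ (otherwise). Producing one zero of $W$ only requires knowing that $W$ is non-constant, which is the much weaker Lemma 2.2. Your version pays for skipping Lemma 2.3 by needing the full polynomial-Wronskian theorem; once Lemma 2.8 is invoked, your proof is complete.
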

%%%%%%%%%%%%%%%%%%%%%%%%%%%%%%%%%%%%%%%%%%%%%%%%%%%%%%%%%%%%%%%%%
To prove Theorem $2.1$ we need the following lemma, whose proof is given in $[ 5 ]$
%%%%%%%%%%%%%%%%%%%%%%%%%%%%%%%%%%%%%%%%
\begin{lemma}
Let $f_{1}, f_{2}\in\mathcal{A}(\mathbb{C}_{p})$ such that $f_{1}^{'}f_{2}-f_{1}f_{2}^{'}\equiv c ;\,\, \, \, c\in \mathbb{C}_{p}$. So, if one of the functions $ f_{1}, f_{2}$ is not affine then $c=0$ \ and \ $f_{1}/f_{2}$ is a constant.  
\end{lemma}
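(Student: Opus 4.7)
The plan is to establish the contrapositive: assume $c \neq 0$ and deduce that both $f_1$ and $f_2$ are affine. The second half of the conclusion is then immediate, since $c = 0$ gives $(f_1/f_2)' = (f_1'f_2 - f_1 f_2')/f_2^2 \equiv 0$ away from the zeros of $f_2$, so $f_1/f_2$ is constant.

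First I would analyze the zeros of the $f_i$. If $f_1$ had a zero of order $\geq 2$ at a point $z_0$, then $f_1(z_0) = f_1'(z_0) = 0$ and so $W(f_1,f_2)(z_0) = 0 \neq c$; hence every zero of $f_1$ is simple, and, by the same argument, every zero of $f_2$ is simple and $f_1, f_2$ have no common zero. Consequently $f_1/f_2$ is a meromorphic function whose poles are simple and located precisely at the zeros of $f_2$. Rewriting the hypothesis as $(f_1/f_2)'(x) = c/f_2(x)^2$ and expanding at a simple zero $z_0$ of $f_2$,
\[
\frac{c}{f_2(x)^2} \;=\; \frac{c}{f_2'(z_0)^2 (x-z_0)^2} \;-\; \frac{c\,f_2''(z_0)}{f_2'(z_0)^3 (x-z_0)} \;+\; O(1),
\]
the absence of a $(x-z_0)^{-1}$ term (since $(f_1/f_2)'$ is the derivative of a meromorphic function) forces $f_2''(z_0) = 0$. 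Therefore $f_2''/f_2$ extends to an entire function.

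The key step is then a standard $p$-adic growth estimate. Since $|n|_p \leq 1$, one has $|g'|(r) \leq |g|(r)/r$ for every $g \in \mathcal{A}(\mathbb{C}_p)$, and iterating together with multiplicativity of the Gauss norm gives
\[
|f_2''/f_2|(r) \;=\; \frac{|f_2''|(r)}{|f_2|(r)} \;\leq\; \frac{1}{r^2} \;\longrightarrow\; 0 \qquad (r \to \infty).
\]
An entire function with $|\cdot|(r) \to 0$ is identically zero, so $f_2'' \equiv 0$ and $f_2$ is affine. The symmetric computation applied to $(f_2/f_1)' = -c/f_1^2$ forces $f_1$ to be affine as well. (The degenerate case where $f_i$ has no zero at all makes the residue step vacuous, but then $f_i$ must be a nonzero constant, since a transcendental element of $\mathcal{A}(\mathbb{C}_p)$ has no exceptional value, so $f_i$ is still affine.) This contradicts the hypothesis that one of $f_1, f_2$ is not affine, so $c=0$.

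The main obstacle is the residue computation at the simple zeros of $f_2$: it is the only point where the algebraic identity $W(f_1, f_2) = c$ is converted into the pointwise condition $f_2''(z_0) = 0$, which the growth estimate subsequently globalizes to the identity $f_2'' \equiv 0$.
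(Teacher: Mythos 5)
The paper does not actually prove this lemma; it is imported from reference [5], so there is no internal proof to compare against. Judged on its own, your argument is correct: the reduction to the contrapositive, the observation that $c\neq 0$ forces $f_1,f_2$ to be coprime with only simple zeros, the extension of $f_2''/f_2$ to an entire function, the estimate $|g'|(r)\le |g|(r)/r$ coming from $|n|_p\le 1$ together with multiplicativity of the Gauss norm, and the fact that an entire function whose maximum modulus tends to $0$ vanishes identically are all valid in $\mathcal{A}(\mathbb{C}_p)$, and you correctly dispose of the zero-free degenerate case (an entire function omitting $0$ is a nonzero constant). The only place you work harder than necessary is the residue computation: differentiating the identity $f_1'f_2-f_1f_2'=c$ gives $f_1''f_2-f_1f_2''=0$, and evaluating at a zero $z_0$ of $f_2$, where $f_1(z_0)\neq 0$ by coprimality, yields $f_2''(z_0)=0$ immediately, with no Laurent expansion or appeal to the vanishing of the residue of a derivative. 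That shortcut also exhibits $f_1''/f_1=f_2''/f_2$ as a single entire function killed by the same growth estimate, which streamlines the symmetric treatment of $f_1$. Your route buys nothing extra here, but it is not wrong; the residue-of-a-derivative argument is purely formal on Laurent expansions and transfers to $\mathbb{C}_p$ without difficulty.
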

%%%%%%%%%%%%%%%%%%%%%%%%%%%%%%%%%%%%%%%%%%%%%%%%%%%%%%%%%%%%%%%%
We also need the following lemma, whose proof is given in $[ 2 ]$
\begin{lemma}
	Let $F,f,g$ be three meromorphic functions in $\mathbb{C}_{p}$. Suppose that $f$ is not a rational function and that $F=f\circ g $. Then g entire.
\end{lemma}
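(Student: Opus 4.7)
The plan is to argue by contradiction: assume that $g$ has a pole of some order $m\geq 1$ at a point $z_0\in\mathbb{C}_p$, and show that this forces $f$ to be rational.

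The first step is a purely local geometric fact about $g$ near $z_0$. Since $u:=1/g$ is analytic on a disk $V=d(z_0,r^-)$ with a zero of exact order $m$ at $z_0$, the $p$-adic open-mapping and Weierstrass-preparation results recalled in the introduction give $u(V)=d(0,s)$ for some $s=s(r)>0$. Hence $g$ sends the punctured disk $V^\circ=V\setminus\{z_0\}$ onto $\{w\in\mathbb{C}_p : |w|>1/s\}$, and any sequence $(z_n)\subset V^\circ$ with $|g(z_n)|\to\infty$ is forced to converge to $z_0$.

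I will then split on the number of poles of $f$. If $f$ has infinitely many poles $(a_n)$, their moduli tend to infinity (they are zeros of the entire denominator of $f$), so for $n$ large one has $a_n\in g(V^\circ)$; picking preimages $z_n\in V^\circ$, the preceding step yields $z_n\to z_0$, and each $z_n$ is a pole of $F=f\circ g$. This produces a sequence of poles of $F$ accumulating at $z_0$, contradicting the fact that poles of a meromorphic function are isolated. Otherwise $f$ has only finitely many poles; writing $f=P/Q$ in lowest terms with $Q$ a polynomial, the hypothesis that $f$ is not rational forces $P\in\mathcal{A}(\mathbb{C}_p)$ to be transcendental, and hence to have infinitely many zeros $(b_n)$ with $|b_n|\to\infty$ (over $\mathbb{C}_p$ an entire function with finitely many zeros is a polynomial, since a nowhere-vanishing entire function is constant). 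The same construction applied to $(b_n)$ produces $z_n\in V^\circ$ with $z_n\to z_0$ and $F(z_n)=f(b_n)=0$; as the zero set of a non-zero meromorphic function is discrete, this forces $F\equiv 0$. Since $g$, having a pole, is nonconstant, the $p$-adic open-mapping theorem applied at any regular point of $g$ produces an open disk contained in the image of $g$, on which $f$ then vanishes identically; this forces $f\equiv 0$, a rational function, contradicting the hypothesis.

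The main obstacle will be the first step: one must describe precisely the image of $u=1/g$ near its zero of order $m$, i.e.\ make rigorous the $p$-adic version of ``local surjectivity onto infinity'' near a pole. Once this is in hand, the rest of the argument is essentially bookkeeping of zeros and poles of $f$ pulled back by $g$ near $z_0$.
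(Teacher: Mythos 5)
Your argument is correct. The paper itself does not prove this lemma---it only cites B\'ezivin--Boutabaa [2]---and your proof is essentially the argument given there: if $g$ had a pole $z_0$, then $g$ covers a full neighbourhood of infinity on any small punctured disk around $z_0$ (your Step 1, which is rigorous over $\mathbb{C}_p$ since $|1/g|(z)=|a_m|\,|z-z_0|^m$ near $z_0$, so preimages of large values must accumulate at $z_0$), and pulling back either the infinitely many poles of $f$ or the infinitely many zeros of its transcendental numerator produces a non-discrete set of poles (resp.\ zeros) of $F$, which is impossible for a meromorphic function.
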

%%%%%%%%%%%%%%%%%%%%%%%%%%%%%%%%%%%%%%%%%%%%%%%%%%%
\noindent{\it Proof of Theorem} \text{2.1. }\\
	Suppose that $F$ is not pseudo-prime. Hence there exist two  transcendental  meromorphic functions $f$ and $g$ such that $ F= f \circ g$. Then, by  Lemma $2.3$, the function  $g$ is entire. Let us write $f$ in the form $f=f_{1}/f_{2}$ where  $f_{1}$,  $f_{2}$ are entire functions with no common zeros. As $f$ is transcendental, we see that at least one of the functions $f_{1}$, $f_{2}$ is  transcendental. So by Lemma $2.2$, \ $f_{1}^{'}f_{2}- f_{1}f_{2}^{'}$   is a non-constant entire function, hence admits at least a zero $\alpha$. Let us distinguish the two following cases:  
	
	\medskip  
	\textbf{1}. If  $f_{2}(\alpha)=0$, then $f_{1}(\alpha)\not=0$ and $f_{2}^{'}(\alpha)=0$, so $\alpha$ is a multiple zero of $f_{2}$ and all element of the set $g^{-1}(\{\alpha\})$ are multiple zeros of $f_{2}\circ g$ and are multiple poles of $F$. Since $g\in\mathcal{A}(\mathbb{C}_{p})\backslash\mathbb{C}_{p}[X]$, then  the set $g^{-1}(\{\alpha\})$ is infinite and $F$ has infinitely many multiple poles, which is  a contradiction.
	
	\medskip
	\textbf{ 2}. If $f_{2}(\alpha)\not=0$, \ then   $f^{'}(\alpha)=(f_{1}^{'}f_{2}- f_{1}f_{2}^{'})(\alpha)/f_{2}(\alpha)=0$. \ Let $\beta=f(\alpha)$, then $\alpha$ is a multiple zero of $f-\beta$.
	But the equation $g(x)=\alpha$ admits  infinitely many solutions and for every such solution  $\omega$ we have: \\
	
	$\left\{\begin{array}{ll}
(F-\beta)(\omega)=F(\omega)-\beta=f\circ g (\omega) -\beta= f(\alpha)-\beta=0,\\ 
	(F-\beta)^{'}(\omega)=F^{'} ( \omega )=f^{'}(g(\omega))\times g^{'}(\omega)=f^{'}(\alpha)\times g^{'}(\omega)=0.\\
	\end{array}\right.$
	
\smallskip

	\noindent Then $F-\beta$ has a infinitely many zeros, which is a contradiction again.\\
	Hence $F$ is pseudo-prime. 	\qquad\qquad\qquad\qquad\qquad\qquad\qquad\qquad\qquad\qquad\qquad\sqw
	
%%%%%%%%%%%%%%%%%%%%%%%%%%%%%%%%%%%%%%%%%%%%%%%%%%%%%%%
\bigskip

Theorem 2.1. provides, in particular, necessary conditions for the pseudo-primeness of $p$-adic entire functions, which are summarized in the following corollary:
%%%%%%%%%%%%%%%%%%%%%%%%%%%%%%%%%%%%%%%%%%%%%%%%%%%%%%%%
\begin{corollary}
	Let $F\in \mathcal{A}(\mathbb{C}_{p}) \backslash\mathbb{C}_{p}[X]$ be  such that, for every $\beta\in\mathbb{C}_{p}$, \ all the zeros of the function $F-\beta$ are simple except  a finite number of them. \ Then $F$ is pseudo-prime. 
\end{corollary}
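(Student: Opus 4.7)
The plan is to observe that this corollary is simply Theorem 2.1 specialized to the entire case, so no new work is needed beyond noting that the hypotheses of the theorem are met.

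First, I would note that every entire function lies in $\mathcal{M}(\mathbb{C}_p)$, and that $F \in \mathcal{A}(\mathbb{C}_p) \setminus \mathbb{C}_p[X]$ means $F$ is a transcendental meromorphic function in $\mathbb{C}_p$, which is the first requirement of Theorem 2.1. Next, since $F$ is entire, it has no poles in $\mathbb{C}_p$ at all; in particular, the set of multiple poles is empty, and so the condition ``all poles of $F$ are simple except a finite number of them'' is vacuously satisfied. The hypothesis on the zeros of $F - \beta$ for every $\beta \in \mathbb{C}_p$ is identical to that in Theorem 2.1.

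Therefore Theorem 2.1 applies directly and yields that $F$ is pseudo-prime. Since this is essentially a restatement, the only thing to verify is the vacuity argument about poles, and there is no substantive obstacle.
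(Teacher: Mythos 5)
Your proposal is correct and matches the paper's (implicit) reasoning exactly: the corollary is obtained by applying Theorem 2.1 to an entire transcendental function, for which the pole condition holds vacuously and the condition on the zeros of $F-\beta$ is the same. Nothing further is needed.
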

%%%%%%%%%%%%%%%%%%%%%%%%%%%%%%%%%%%%%%%%%%%%%%%%%%%%
The following results  give more information about the left factor in any factorization of a $p$-adic  entire function that satisfies the above conditions.
%%%%%%%%%%%%%%%%%%%%%%%%%%%%%%%%%%%%%%%%%%%
\begin{theorem}
	Let $F\in \mathcal{A}(\mathbb{C}_{p}) \backslash\mathbb{C}_{p}[X] $ be such that for every $\beta\in\mathbb{C}_{p}$, only finitely many zeros of $F-\beta$ are multiple. Then $F$ is left-prime. 
\end{theorem}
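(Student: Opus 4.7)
The plan is to reduce to the pseudo-prime conclusion of Corollary 2.4 and then refine it.

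Suppose $F = f \circ g$ is any factorization with $g$ transcendental. First I would invoke Corollary 2.4: since $F$ satisfies the hypothesis, $F$ is pseudo-prime, so at least one of $f$, $g$ is rational. Because $g$ is transcendental, this forces $f$ to be rational; the task is then to upgrade ``rational'' to ``linear rational''.

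Next I would show that $f$ must in fact be a polynomial. Write $f=f_{1}/f_{2}$ in lowest terms. If $f_{2}$ had a zero $\gamma_{0}$, then $\gamma_{0}$ would be a pole of $f$; since $g$ is a transcendental entire function over $\mathbb{C}_{p}$ and such functions have no Picard value, $g$ would take the value $\gamma_{0}$, producing a pole of $F=f\circ g$. This contradicts $F\in\mathcal{A}(\mathbb{C}_{p})$. Hence $f_{2}$ is constant and $f$ is a polynomial.

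Finally I would rule out $\deg f \geq 2$ by a multiple-zero argument identical in spirit to Case 2 of the proof of Theorem 2.1. If $\deg f = n \geq 2$, then $f'$ is a non-constant polynomial of degree $n-1$, so it has a zero $\alpha\in\mathbb{C}_{p}$. Set $\beta = f(\alpha)$, so $\alpha$ is a multiple zero of $f-\beta$. Because $g$ is transcendental entire, the equation $g(x)=\alpha$ has infinitely many solutions $\omega$, and for each such $\omega$ one computes
\[
(F-\beta)(\omega) = f(\alpha)-\beta = 0, \qquad (F-\beta)'(\omega) = f'(\alpha)\,g'(\omega) = 0.
\]
Thus $F-\beta$ has infinitely many multiple zeros, contradicting the hypothesis on $F$. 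Therefore $\deg f = 1$, i.e., $f$ is a linear (hence linear rational) function, which proves that $F$ is left-prime.

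The main obstacle is the transition from ``$f$ rational'' to ``$f$ polynomial''; it hinges on the specifically non-Archimedean fact, recalled in the introduction, that a transcendental entire function on $\mathbb{C}_{p}$ has no exceptional value. Once $f$ is known to be a polynomial, the multiple-zero/chain-rule trick is the same mechanism already used in Theorem 2.1 and is essentially routine.
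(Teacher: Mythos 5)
Your proof is correct and follows essentially the same route as the paper's: invoke Corollary 2.4 to force $f$ to be rational, then take a zero $\alpha$ of $f'$, set $\beta=f(\alpha)$, and use the infinitely many preimages in $g^{-1}(\{\alpha\})$ to produce infinitely many multiple zeros of $F-\beta$, contradicting the hypothesis. Your intermediate step upgrading ``$f$ rational'' to ``$f$ polynomial'' via the absence of Picard values for transcendental entire functions is a welcome piece of care that the paper sidesteps by simply taking $f\in\mathcal{A}(\mathbb{C}_{p})$ from the outset.
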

%%%%%%%%%%%%%%%%%%%%%%%%%%%%%%%%%%%%%%%%%ù
\begin{proof}
	Suppose that $F=f \circ g $ where $g\in \mathcal{A}(\mathbb{C}_{p}) \backslash\mathbb{C}_{p}[X]$ and $f\in \mathcal{A}(\mathbb{C}_{p})$. By Corollary $2.4$, we know that $F$ is pseudo-prime. So  $f$ is a polynomial. Suppose that  $\deg f \geq 2$, then $f^{'}$ has at least one zero $\alpha$. Since $g\in \mathcal{A}(\mathbb{C}_{p}) \backslash\mathbb{C}_{p}[X] $, the set $W=\{x\in\mathbb{C}_{p} \ / \ g(x)=\alpha \}$ is infinite.  \\
	Let $\beta=f(\alpha)$. \ Then, for every $\omega\in W$, we have:

	$\left\{\begin{array}{ll}
	((F-\beta)(\omega)=F(\omega)-\beta=f\circ g (\omega) -\beta= f(\alpha)-\beta=0,\\ 
	(F-\beta)^{'}(\omega)=F^{'} ( \omega )=f^{'}(g(\omega))\times g^{'}(\omega)=f^{'}(\alpha)\times g^{'}(\omega)=0.\\
	\end{array}\right.$
	
	\smallskip
	
	\noindent This means that \ $F-\beta$ has infinitely many multiple zeros, a contradiction.  Hence the left-primeness of $f$  is proven.
\end{proof}
%%%%%%%%%%%%%%%%%%%%%%%%%%%%%%%%%%%%%%%%%%%%%%%%%%%%%%%%%%%%%%%%%%%%
\begin{theorem}
	Let $F\in \mathcal{A}(\mathbb{C}_{p}) \backslash\mathbb{C}_{p}[X]$ be  such that for every $\beta\in\mathbb{C}_{p} $, the function $F-\beta$ has at most one multiple zero. Then $F$ is prime.  
\end{theorem}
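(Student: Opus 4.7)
The plan is to leverage the earlier results in the paper. The hypothesis here is strictly stronger than that of Theorem $2.5$ (``at most one multiple zero'' certainly implies ``only finitely many multiple zeros''), so Corollary $2.4$ and Theorem $2.5$ already give that $F$ is pseudo-prime and left-prime. Let $F = f \circ g$ be any meromorphic factorization; one must show that either $f$ or $g$ is a linear rational function. Since $F$ is transcendental, at least one of $f, g$ must be transcendental, and if $g$ is transcendental then left-primeness of $F$ immediately forces $f$ to be linear rational. Hence the only remaining case is: $f$ transcendental and $g$ rational.

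In this situation, Lemma $2.3$ yields that $g$ is entire, so, being both entire and rational, $g$ is a polynomial; and since $g$ is a non-constant polynomial over the algebraically closed field $\mathbb{C}_{p}$, it is surjective, so $f$ cannot have a pole and is therefore a transcendental entire function. It remains to prove that $g$ is linear.

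I would argue by contradiction: suppose $n := \deg g \geq 2$. Since $f$ is transcendental entire and we are in characteristic zero, the power series of $f$ has infinitely many nonzero coefficients of positive degree, so $f'$ is again transcendental entire; in particular $f'$ has infinitely many zeros in $\mathbb{C}_{p}$, because transcendental $p$-adic entire functions have no Picard value. On the other hand, $g'$ is a polynomial of degree $n - 1$, so $g$ has at most $n - 1$ critical values. I can therefore choose a critical point $\gamma$ of $f$ that is not a critical value of $g$. The equation $g(x) = \gamma$ then has $n$ distinct roots $x_{1}, \ldots, x_{n}$ with $g'(x_{i}) \neq 0$, and setting $\beta = f(\gamma)$ yields $F(x_{i}) = f(\gamma) = \beta$ and $F'(x_{i}) = f'(\gamma)\, g'(x_{i}) = 0$ for each $i$. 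Thus $F - \beta$ admits $n \geq 2$ distinct multiple zeros, contradicting the hypothesis; so $\deg g = 1$, i.e.\ $g$ is linear.

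The main obstacle is producing a single value $\beta$ with several multiple zeros of $F - \beta$. The trick is to draw $\beta$ from the critical values of $f$ (which form an infinite set, thanks to $f'$ being transcendental) while avoiding the finite set of critical values of $g$: this guarantees that the unique $f$-preimage $\gamma$ of $\beta$ has exactly $n$ distinct $g$-preimages, and each of them is forced to be a multiple zero of $F - \beta$ by the chain rule. Apart from this choice, everything reduces to the previously established pseudo-primeness and left-primeness.
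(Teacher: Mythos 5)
Your proposal is correct and follows essentially the same route as the paper: left-primeness via Theorem 2.5, then right-primeness by picking a zero $w$ of $f'$ such that $g-w$ has only simple zeros (your ``critical point of $f$ avoiding the critical values of $g$'' is the same choice), producing $\deg g\geq 2$ distinct multiple zeros of $F-\beta$. You are in fact slightly more careful than the paper in justifying that $f$ must be entire and that the required $w$ exists, but the argument is the same.
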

%%%%%%%%%%%%%%%%%%%%%%%%%%%%%%%%%%%%%%%%
\begin{proof}
	By Theorem $2.5$, we already see that $F$ is left-prime. So,  it  remains to  show the right-primeness of $F$. For that,  suppose that $F(z)=f\circ g$, where  $f\in\mathcal{A}(\mathbb{C}_{p}) \backslash\mathbb{C}_{p}[X]$ and $g\in \mathcal{A}(\mathbb{C}_{p})$. From Corollary $2.4$, we know that $F$ is pseudo-prime. So  $g$ is a polynomial. Suppose that  $\deg g=d \geq 2$. We have
	\ $F'(z)=f^{'}(g(z))g^{'}(z).$ \ 
	Since $f\in \mathcal{A}(\mathbb{C}_{p}) \backslash\mathbb{C}_{p}[X] $,  the function  $f^{'}$ \ has infinitely many zeros. So we may choose an element $w\in\mathbb{C}_{p}$ such that $f^{'}(w)=0$ and $g-w$ has only simple zeros $\gamma_{1}, ...,\gamma_{d}$. \ Then for $i=1,...,d$, we have  
	$ \left\{
	\begin{array}{ll}
	F(\gamma_{i})=f(w)=\beta\\ 
	(F-\beta)^{'}(\gamma_{i})=0\\ 
	\end{array} \right.$,      
	\  which means $\gamma_{1}, ...,\gamma_{d}$ are multiple zeros  of $F-\beta$, a contradiction. Hence $F(z)$ is right-prime.
\end{proof}
%%%%%%%%%%%%%%%%%%%%%%%%%%%%%%%%%%%%%%%%%%
\begin{theorem}
	Let $F$ be a $p$-adic  transcendental  meromorphic function that admits at most finitely many poles. Suppose that for every $\beta\in\mathbb{C}_{p} $, only finitely many zeros of $F-\beta$ are multiple. Then $F$ is right-prime. 
\end{theorem}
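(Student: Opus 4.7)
\emph{Proof plan.} I proceed by contradiction: assume $F = f \circ g$ with $f$ transcendental meromorphic and $g$ not linear, and derive a contradiction that forces $g$ to be linear.

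First I perform the standard reductions. Since $f$ is not rational, Lemma~2.3 gives $g \in \mathcal{A}(\mathbb{C}_p)$. Theorem~2.1 applies: its pole condition is trivial (since $F$ has only finitely many poles, it has finitely many non-simple poles), and its zero condition is the standing hypothesis. Thus $F$ is pseudo-prime, and the transcendence of $f$ forces $g$ rational, hence a polynomial of some degree $d$. The case $d=0$ is excluded since $F$ is transcendental; I assume $d \geq 2$ for contradiction.

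Next I transfer the hypotheses from $F$ to $f$ using the identity $F - \beta = (f - \beta) \circ g$. Since $g$ is a surjective polynomial of degree $d$, every pole of $f$ lifts to $d$ poles of $F$, so $f$ has finitely many poles. Similarly, each multiple zero $w$ of $f - \beta$ (those lying outside the finite critical-value set of $g$) lifts, via the chain-rule computation already used in the proofs of Theorems~2.1 and~2.6, to $d$ multiple zeros of $F - \beta$ at $g^{-1}(w)$. The standing hypothesis on $F$ therefore forces $f - \beta$ to have only finitely many multiple zeros for every $\beta \in \mathbb{C}_p$; in particular $f$ itself satisfies all the hypotheses of the theorem.

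Finally, writing $f = h/Q$ with $h \in \mathcal{A}(\mathbb{C}_p)$ transcendental entire and $Q \in \mathbb{C}_p[X]$ coprime to $h$, the derivative $f' = (h'Q - hQ')/Q^2$ has a nonzero entire numerator which cannot be a polynomial (otherwise $f$ would be rational), using that a $p$-adic entire function with finitely many zeros is a polynomial. Hence $f'$ has infinitely many zeros, so $f$ has infinitely many critical points $\{w_i\}$ with critical values $\beta_i = f(w_i)$. The main obstacle — the point where this proof must go beyond that of Theorem~2.6 — is to show that the map $w \mapsto f(w)$ on $(f')^{-1}(0)$ has an infinite fiber over some $\beta \in \mathbb{C}_p$; then $f - \beta$ would have infinitely many multiple zeros, contradicting the property just inherited from $F$. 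I expect to close this step by a $p$-adic value-distribution argument exploiting essentially the finite-pole hypothesis on $f$ — precisely the condition that distinguishes the $p$-adic setting from the complex one, where examples like $e^z$ (which are not $p$-adic entire) would obstruct an analogous statement. Once this step is in hand, the contradiction forces $d=1$, and $g$ is linear.
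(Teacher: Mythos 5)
Your reductions are correct and coincide with the paper's: Lemma 2.3 makes $g$ entire, Theorem 2.1 (whose pole condition is indeed automatic here) gives pseudo-primeness so that $g$ is a polynomial of some degree $d\ge 2$, and the hypotheses do descend from $F$ to $f$ exactly as you say, since every multiple zero of $f-\beta$ pulls back under the surjective polynomial $g$ to at least one multiple zero of $F-\beta$. The genuine gap is the step you explicitly leave open, and it is worse than an unfinished computation: the claim you need --- that some fiber of $w\mapsto f(w)$ restricted to the zero set of $f'$ is infinite --- does not follow from the hypotheses, and no value-distribution argument will supply it. Take any entire $f=\sum a_nx^n$ with $(|a_n/a_{n+1}|)_n$ strictly increasing and unbounded; the proof of Theorem 2.9 shows that $f-\beta$ has only finitely many multiple zeros for every $\beta$, i.e.\ every fiber of your critical-value map is already finite. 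Worse, $F(z)=f(z^2)$ then satisfies all the hypotheses of the present theorem (it is entire, and the multiple zeros of $F-\beta$ are at most the point $0$ together with the square roots of the finitely many multiple zeros of $f-\beta$), while $F=f\circ z^2$ exhibits a transcendental left factor with a non-linear right factor. So the hypotheses as stated cannot force the conclusion you are after, and your plan cannot be completed without strengthening them, e.g.\ to the ``at most one multiple zero'' hypothesis of Theorem 2.6, where producing two multiple zeros of a single $F-\beta$ already yields the contradiction.

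For comparison, the paper closes the argument by a different device: it writes $f=f_1/f_2$ with $f_2$ a polynomial (since $f$ inherits finitely many poles), applies B\'ezivin's Wronskian lemma (Lemma 2.8) to conclude that $f_1'f_2-f_1f_2'$ is transcendental and hence has infinitely many zeros $w_n$ outside the zero set of $f_2$, and observes that for large $n$ the fiber $g^{-1}(w_n)$ contains at least two distinct points, each a multiple zero of $F-f(w_n)$. This produces two multiple zeros of $F-\beta$ for infinitely many \emph{distinct} values $\beta=f(w_n)$, which contradicts the hypothesis of Theorem 2.6 but not, without further argument, the per-value finiteness assumed here. In other words, the difficulty you honestly flagged is present in the paper's own proof as well; recognizing it is the real content of your attempt, but as a proof of the stated theorem the proposal is incomplete.
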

%%%%%%%%%%%%%%%%%%%%%%%%%%%%%%%%%%%%%%%%
To prove this theorem, we need the following lemma whose proof is given in B\'ezivin $[3]$. It is more general  than Lemma $2.2.$
%%%%%%%%%%%%%%%%%%%%%%%%%%%%%%%%%%%%%%%%
\begin{lemma}
	Let $n\geq 1$ and let $f_{1}, \dots,  f_{n}$ be $p$-adic entire functions  such that the wronskian $W(f_{1}, \dots,  f_{n} )$ is a non-zero polynomial. Then $f_{1}, \dots,  f_{n}$ are polynomials.  
\end{lemma}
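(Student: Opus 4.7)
The plan is to adapt the proof of Theorem $2.6$ to the present meromorphic setting: reduce via pseudo-primeness to a factorization $F = f\circ g$ with $g$ a polynomial of degree $d\geq 2$, and then exhibit enough multiple zeros of $F-\beta$ to violate the hypothesis. Given such a factorization with $f$ transcendental meromorphic, Lemma $2.3$ (applied because $f$ is not rational) gives $g$ entire. The hypothesis of Theorem $2.7$ is stronger than that of Theorem $2.1$ (``finitely many poles'' a fortiori yields ``finitely many multiple poles''), so Theorem $2.1$ applies and $F$ is pseudo-prime; $g$ must therefore be rational, and being entire as well, $g$ is a polynomial. It remains to rule out $d := \deg g \geq 2$.

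Assume $d\geq 2$. Since the poles of $F = f\circ g$ lie in $g^{-1}(\mathrm{poles}(f))$, the finite pole set of $F$ forces $f$ to have only finitely many poles; write $f = h/P$ in lowest terms with $h$ transcendental entire and $P$ polynomial. The role of Lemma $2.8$ is to show $f'$ has infinitely many zeros. A direct computation yields
\[
W(h\circ g,\, P\circ g) \;=\; g'\cdot (P\circ g)^{2}\cdot (f'\circ g),
\]
and this Wronskian is non-zero (otherwise $h\circ g$ and $P\circ g$ would be proportional, forcing $h$ to be a constant multiple of $P$ and contradicting transcendence). Since $h\circ g$ is transcendental entire, Lemma $2.8$ forbids $W$ from being a non-zero polynomial; hence $W$ is transcendental entire. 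Dividing out the polynomial factors $g'$ and $(P\circ g)^{2}$ shows $f'$ is not polynomial, and if $f'$ were rational then partial-fraction integration would express $f$ as a rational function plus logarithmic terms --- forbidden for meromorphic functions on $\mathbb{C}_p$ --- forcing $f$ rational and contradicting its transcendence. Hence $f'$ is transcendental meromorphic and has infinitely many zeros.

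Now mimic the end of the proof of Theorem $2.6$. The critical values of $g$ form a finite set (at most $d-1$ elements) and $f$ has finitely many poles, so we may choose $w$ with $f'(w)=0$ avoiding both exclusions; then $g-w$ has $d$ distinct simple zeros $\gamma_1,\dots,\gamma_d$, and for $\beta := f(w)$ the chain rule yields
\[
(F-\beta)(\gamma_i) = 0, \qquad (F-\beta)'(\gamma_i) = f'(w)\,g'(\gamma_i) = 0,
\]
so each $\gamma_i$ is a multiple zero of $F-\beta$. The main obstacle is that $d$ multiple zeros for a single $\beta$ do not yet contradict the ``only finitely many multiple zeros'' hypothesis, whereas Theorem $2.6$ succeeded immediately because its hypothesis allowed at most one. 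To force \emph{infinitely many} multiple zeros of a single $F-\beta$, I would iterate the construction over the infinitely many critical points $\{w_k\}$ of $f$: if some value $\beta$ is attained as $f(w_k)$ for infinitely many $k$, pooling the associated $\gamma$-tuples gives the desired contradiction. Excluding the alternative --- where the $f(w_k)$'s are essentially distinct --- is the delicate step, and I expect it to require either a subtler application of Lemma $2.8$ to a carefully chosen triple of entire functions or a more refined exploitation of the specific structure $f = h/P$ with $P$ polynomial and $F$ having finitely many poles.
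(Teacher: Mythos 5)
You have not proved the statement at all: what you wrote is an attempted proof of Theorem 2.7 (right-primeness of a transcendental meromorphic $F$ with finitely many poles), in the course of which Lemma 2.8 is invoked as a known tool. The statement to be established is Lemma 2.8 itself, the purely function-theoretic assertion that if $f_1,\dots,f_n$ are entire on $\mathbb{C}_p$ and $W(f_1,\dots,f_n)$ is a non-zero polynomial, then every $f_i$ is a polynomial. Nothing in your argument addresses this: the Wronskian appears in your text only through the identity $W(h\circ g,\,P\circ g)=g'\,(P\circ g)^{2}\,(f'\circ g)$, which you use in order to \emph{apply} the lemma, not to prove it. A genuine proof would have to work directly with the Wronskian --- e.g.\ the trivial case $n=1$, the case $n=2$ as a strengthening of Lemma 2.2 from constant to polynomial Wronskian, and then some induction or growth/zero-counting argument comparing $|W(f_1,\dots,f_n)|(r)$ with the $|f_i|(r)$. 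Note that the paper itself gives no proof either: it explicitly attributes the result to B\'ezivin~[3], so there is no internal argument to recover; but citing or reconstructing that proof is what the task required, and your proposal does neither.

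Secondarily, even read as a proof of Theorem 2.7, your argument is incomplete by your own admission: producing $d$ multiple zeros of $F-\beta_k$ for infinitely many \emph{distinct} values $\beta_k=f(w_k)$ does not contradict the hypothesis that each individual $F-\beta$ has only finitely many multiple zeros, and you leave open the step that would force infinitely many multiple zeros for a single $\beta$. So the proposal neither proves the assigned lemma nor fully proves the theorem it actually targets.
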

%%%%%%%%%%%%%%%%%%%%%%%%%%%%%%%%%%%%%%%%%%%%%

\noindent{\it Proof of Theorem} \text{2.7. }
	   
	Suppose that $F(z)=f\circ g$, where  $f\in\mathcal{M}(\mathbb{C}_{p}) \backslash\mathbb{C}_{p}(X)$ and $g\in\mathcal{M}(\mathbb{C}_{p})$. By Theorem $2.1$ the function $F$ is pseudo-prime. Then $g$  is a polynomial function. Suppose that $ deg\, g  \geq 2$. Let us write $f$ in the form $f={f_{1}}/{f_{2}}$ \ where  $f_{1}$,  $f_{2}$ are entire functions with no common zeros. \ We have \ $\displaystyle f'(z)=W(f_{2},f_{1})/{f_{2}^{2}})(g(z))g^{'}(z).$ \  As $f$ is transcendental and has a finite number of poles , we see that $f_{1}$ is transcendental and $f_{2}$ is polynomial. \ As $f \in\mathcal{M}(\mathbb{C}_{p}) \backslash\mathbb{C}_{p}(X)$, \ it follows by Lemma 2.8 that $W(f_{2},f_{1})\in \mathcal{A}(\mathbb{C}_{p}) \backslash\mathbb{C}_{p}(X) $ \ and  admits then infinitely many zeros $\{w_{n}\}$  ( which are not  zeros of $f_{2}$ . For every integer $n$, big enough,  the equation $g(z)=w_{n}$ admits at least two distinct roots , which are also   common roots of
$\displaystyle \left\{
	\begin{array}{ll}
	F(z)=f(w_{n})\\ 
	F^{'}(z)=0\\
	\end{array} \right.$. \ 
Then we have a contradiction and $F(z)$ is right-prime.  \qquad\qquad\qquad\qquad\qquad\quad	\sqw
	
	\bigskip
	
	%%%%%%%%%%%%%%%%%%%%%%%%%%%%%%%%%%%%%%ù	
	\begin{theorem}
		Let $f(x)=\sum_{n\geq N}a_{n}x^{n}$ be a $p$-adic  entire function such that $a_{n} \neq0$, for every $n\geq N$. Suppose that there exists an integer  $n_{0} \geq N $  such that   $(|a_{n}/a_{n+1}|)_{n\geq n_{0}} $ is a strictly increasing unbounded sequence.\ Then the function $f$ is pseudo-prime.     
	\end{theorem}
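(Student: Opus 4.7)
\noindent\emph{Proof proposal.} The plan is to verify the hypothesis of Corollary 2.4: show that for every $\beta\in\mathbb{C}_p$ the function $f-\beta$ has all but finitely many of its zeros simple; pseudo-primeness of $f$ then follows directly from that corollary. The whole argument rests on the $p$-adic Newton polygon of $f$, which under the strict monotonicity of $|a_n/a_{n+1}|$ is as well-behaved as possible.

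First I would check that $f$ is indeed entire and transcendental. Since $|a_n/a_{n+1}|$ is strictly increasing and unbounded we have $|a_n/a_{n+1}|\to\infty$, hence $|a_n|^{1/n}\to 0$, so $f\in\mathcal{A}(\mathbb{C}_p)$; transcendence is immediate from $a_n\ne 0$ for every $n\ge N$. Setting $v_n=-\log|a_n|$, the difference $v_{n+1}-v_n=\log|a_n/a_{n+1}|$ is strictly increasing for $n\ge n_0$, i.e.\ $(v_n)_{n\ge n_0}$ is strictly convex. Consequently the Newton polygon of $f$ has a vertex at every integer $n\ge n_0$ and each of its segments has horizontal length $1$ with a pairwise distinct slope. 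By standard $p$-adic theory, $f$ has exactly one zero of absolute value $r_n:=|a_n/a_{n+1}|$ for each $n\ge n_0$, this zero is simple, and the $r_n$ are pairwise distinct with $r_n\to\infty$.

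Next, for arbitrary $\beta\in\mathbb{C}_p$, I would compare $f$ with $f-\beta$. Replacing $n_0$ by $\max(n_0,1)$ if necessary, the functions $f$ and $f-\beta$ have identical coefficients at every index $n\ge n_0$, since they differ only in the constant term. Their Newton polygons therefore coincide from the vertex at abscissa $n_0$ onward, and the argument above applies verbatim: every zero of $f-\beta$ of absolute value at least $r_{n_0}$ is simple. Because an entire function has only finitely many zeros in any bounded disk, $f-\beta$ can have only finitely many multiple zeros. Corollary 2.4 then yields that $f$ is pseudo-prime.

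The step I expect to require the most care is the Newton-polygon analysis for $f-\beta$, where one must argue that altering only the constant term cannot create multiple zeros at arbitrarily large radii; this is precisely the point at which \emph{strict} monotonicity of $|a_n/a_{n+1}|$ is essential, as mere convexity would still allow segments of horizontal length $\ge 2$ at arbitrarily large abscissae. Once this is in place the reduction to Corollary~2.4 is immediate.
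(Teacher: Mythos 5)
Your proposal follows essentially the same route as the paper: the Newton-polygon analysis you describe is exactly the computation the paper carries out explicitly with $|f|(r)=\max_{\ell}|a_{\ell}|r^{\ell}$ and Lemma 2.10 (unit horizontal length of a segment $=$ one simple zero on the corresponding circle), and both arguments conclude via Corollary 2.4. One step is stated too strongly, though. From the fact that $f$ and $f-\beta$ share all coefficients of index $\ge 1$ you infer that their Newton polygons coincide from abscissa $n_0$ onward, hence that every zero of $f-\beta$ of absolute value at least $r_{n_0}$ is simple. This is false for large $|\beta|$: if $v(a_0-\beta)$ is very negative, the first segment of the polygon of $f-\beta$ runs from $(0,v(a_0-\beta))$ to a vertex at some abscissa $m_\beta$ that grows without bound as $|\beta|\to\infty$, erasing the unit-length segments between $n_0$ and $m_\beta$ and producing $m_\beta$ zeros of equal (and possibly large) absolute value about which your argument says nothing. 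What is true --- and all you need --- is that the two polygons agree beyond a $\beta$-dependent abscissa $m_\beta$, equivalently that $|f-\beta|(r)=|f|(r)$ for $r>r_\beta$ with $r_\beta$ depending on $\beta$, which is precisely how the paper phrases it; since $m_\beta$ is finite for each fixed $\beta$, the function $f-\beta$ still has only finitely many multiple zeros. With that correction your proof is complete and matches the paper's.
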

	
	\medskip
	
	\noindent To prove Theorem 2.9 we need the following lemma whose proof is given in [4].  
	%%%%%%%%%%%%%%%%%%%%%%%%%%%%%%%%%%%%%%%%
	
	\begin{lemma}
		Let $f$ be a $p$-adic entire function defined by  $f(x)=\sum_{n\geq N}a_{n}x^{n}$. \ Let $\mu$ and $\nu$  be, respectively, the smallest integer and the largest one such that $\displaystyle|f|(r)=|a_\mu|r^\mu=|a_\nu|r^\nu$. \ Then:
		
		\ \ i) \ \ $\mu$ \ is the number of zeros of $f$ in the disk $d(0, r^-)$;
			
			\ ii) \ \  $\nu$ \ is the number of zeros of $f$ in the disk $d(0, r)$;
			
			iii) \ \ $\nu-\mu$ \ is the number of zeros of $f$ in the circle ${\mathcal C}(0, r)$.
	
	\end{lemma}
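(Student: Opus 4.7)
The plan is to prove this by invoking the Newton polygon of the series $f(x)=\sum_{n\geq N}a_{n}x^{n}$, together with the classical $p$-adic correspondence between its slopes and the absolute values of the zeros of $f$. Set $v(\cdot)=-\log|\cdot|$ and consider the lower convex hull in $\mathbb{R}^{2}$ of the set $\{(n,v(a_{n})):n\geq N,\ a_{n}\neq 0\}$. Because $f$ is entire, $v(a_{n})-n\log r\to+\infty$ for every fixed $r>0$, so this polygon has a well-defined sequence of finite slopes accumulating only at $+\infty$, and the identity $|f|(r)=\max_{n}|a_{n}|r^{n}$ translates to $-\log|f|(r)=\min_{n}(v(a_{n})-n\log r)$.

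The first step is to invoke the standard result (valid in the non-Archimedean setting) that the distinct slopes $s_{1}<s_{2}<\cdots$ of this polygon are exactly the values $\log|\alpha|$ as $\alpha$ ranges over the nonzero zeros of $f$, with the multiplicity of $\alpha$ equal to the horizontal length of the edge of slope $\log|\alpha|$. The indices $n$ achieving the minimum of $L_{r}(n):=v(a_{n})-n\log r$ are precisely those where the support line of slope $\log r$ touches the polygon, and by definition they form the interval $[\mu,\nu]$. Two cases occur: either $r$ is not the absolute value of any zero (support line meets the polygon in a single vertex, so $\mu=\nu$) or there is an edge of slope exactly $\log r$ whose horizontal length is $\nu-\mu$.

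The three counts then follow at once. The nonzero zeros of $f$ in $d(0,r^{-})$ correspond to polygon edges of slope strictly less than $\log r$, which sit entirely to the left of index $\mu$; their total horizontal length is $\mu-N$, and together with the multiplicity-$N$ zero at the origin this gives $\mu$ zeros in $d(0,r^{-})$, proving (i). Including also the zeros on the circle $C(0,r)$, which come from the edge of slope $\log r$ (if any) and contribute an additional length $\nu-\mu$, one obtains $\nu$ zeros in $d(0,r)$, proving (ii); subtracting the two counts yields (iii). I expect the main obstacle to be justifying the Newton polygon--zeros correspondence for an \emph{entire} function with possibly infinitely many zeros: the cleanest route is the $p$-adic Weierstrass factorization, which for each fixed $r>0$ decomposes $f=P\cdot h$ where $P$ is a polynomial whose zeros are exactly those of $f$ in $d(0,r)$ and $h$ is an entire function with no zeros in $d(0,r)$, hence satisfying $|h|(s)=|h(0)|$ for $s\leq r$. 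This localizes the statement at radius $r$ and reduces the whole claim to the polynomial case, where the correspondence between coefficients and zero moduli is a direct consequence of Vieta's formulas combined with the ultrametric strong triangle inequality.
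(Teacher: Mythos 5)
Your outline is correct, but note that the paper does not prove this lemma at all: it is quoted as a known result with a pointer to Escassut's book [4], so there is no internal proof to compare against. What you propose is essentially the standard argument from that reference: translate $|f|(r)=\max_n|a_n|r^n$ into the support-line picture for the Newton polygon of $(n,v(a_n))$, identify $\mu$ and $\nu$ with the endpoints of the contact set of the line of slope $\log r$, and convert horizontal edge-lengths into zero counts. The two points worth tightening if you wrote this out in full are exactly the ones you flag. First, the slope--zero correspondence for an entire function does need the Weierstrass-type factorization $f=P\cdot h$ on $d(0,r)$, and the reduction to $P$ uses the multiplicativity $\mu(Ph)=\mu(P)+\mu(h)$, $\nu(Ph)=\nu(P)+\nu(h)$ together with $\mu(h)=\nu(h)=0$ for the zero-free factor; you should state that multiplicativity explicitly rather than leave it inside ``this localizes the statement.'' Second, the indices achieving the maximum need not fill the whole interval $[\mu,\nu]$ (interior coefficients may vanish or lie strictly above the edge); this is harmless since only the extremes matter, but the phrase ``they form the interval $[\mu,\nu]$'' is slightly too strong. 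With those caveats, the polynomial case via $|P|(s)=|c|\prod_i\max(s,|\alpha_i|)$ and the bookkeeping with the order-$N$ zero at the origin give exactly statements (i)--(iii), so your route is sound and is the one the cited source takes.
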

	%%%%%%%%%%%%%%%%%%ù	
	\noindent{\it Proof of Theorem 2.9.} 

	Let us, first set $r_{n}=|a_{n}/a_{n+1}|, \  \forall n \geq 0$. \ Quit to replace $n_0$ by a greater integer, we may assume that $n_0$ is such that: \ $r_{n_{0}}>\max\{r_{0}, \dots, r_{n_{0}-1}\}$.
\ In the sequel, we will show  that $f$ has only simple zeros in $\mathbb{C}_{p} \backslash d(0,r_{n_{0}})$. \ Indeed, let $r>r_{n_{0}}$. \  We distinguish two cases:

		\noindent{\it \ i}) \ $r=r_n$ for some $n\geq n_{0}+1$. As the sequence $(r_k)_{k\geq n_0}$ is strictly increasing, we have $r_{n-1}<r_{n}<r_{n+1}$. We then have:
			
			 $|a_{n+1}|r_{n}^{n+1}/|a_{n}|r_{n}^{n}=|a_{n+1}/a_{n}|r_{n}=1$, which means that  $|a_{n}|r_{n}^{n}=|a_{n+1}|r_{n}^{n+1}$.\\
			Moreover for every integer $\ell$,  $0\leq \ell \leq n-1$, we have:
			
		$ |a_{\ell}|r_{n}^{\ell}/|a_{n}|r_{n}^{n}=|a_{\ell}/a_{n}|{1}/{r_{n}^{n-\ell}}= 
			|a_{\ell}/a_{\ell+1}|\dots |a_{n-1}/a_{n}|{1}/{r_{n}^{n-k}} <1 $.\\
			Finally, for every integer $\ell$, \   $\ell>n+1$, we have:
			
				$|a_{\ell}|r_{n}^{\ell}/|a_{n+1}|r_{n}^{n+1}= 
			|a_{\ell}/a_{n+1}|r_{n}^{\ell-n-1}$
			
			\qquad\qquad\qquad\qquad$<|a_{\ell}/a_{n+1}||a_{n+1}/a_{n+2}|\dots |a_{\ell-1}a_{\ell}|=1.$
			
		\noindent	Hence $|f|(r_{n})=\max_{\ell\geq 0} |a_{\ell}|r_{n}^{\ell}$ is reached for the two values   $\ell=n$ and $\ell=n+1$. This implies, by Lemm 2.10.,  that  $f$ has only one zero in the circle $ C(0,r_{n}).$\\
	 
		\noindent{\it ii}) \ Suppose now that $r$ is different from $r_n$ for all $n>n_0$.  Let  $n\geq n_{0}$ be the sole integer such that $r_{n}<r<r_{n+1}$. We then have, for every integer $\ell$,  $0\leq \ell \leq n $:\\  $\displaystyle|a_{\ell}|r^\ell=|\dfrac{a_{\ell}}{a_{\ell+1}}||\dfrac{a_{\ell+1}}{a_{\ell+2}}| \dots|\dfrac{a_{n}}{a_{n+1}}||a_{n+1}|r^{\ell}
			\leq|a_{n+1}|r_{n}^{n+1-\ell} r^{\ell}<|a_{n+1}|r^{n+1}$.
		
		\noindent	Moreover for every integer
		 $\ell>n+1$, we have:
		\\   
			$\displaystyle \dfrac{|a_{\ell}|r^{\ell}}{|a_{n+1}|r^{n+1}} = \dfrac{|a_{\ell}|}{|a_{n+1}|}r^{\ell-n-1}
			< \dfrac{|a_{\ell}|}{|a_{n+1}|}r_{n+1}^{\ell-n-1}\leq |\dfrac{a_{\ell}}{a_{n+1}}||\dfrac{a_{n+1}}{a_{n+2}}|\dots
			|\dfrac{a_{\ell-1}}{a_{\ell}}|=1.$
			
			\noindent Then $|f|(r)=\max_{\ell\geq 0} |a_{\ell}|r^{\ell}$ is reached 
			only for $\ell=n+1$. This implies, by Lemma 2.10.,  that $f$ has no zero in the circle $ C(0,r)$. \  It follow that all the zeros of  $f$ in $\mathbb{C}_{p} \backslash d(0,r_{n_{0}})$ are simple.  \
		Now,  for every $\beta\in \mathbb{C}_{p},$ there exists $r_{\beta}>0$ such that: \ {  $\displaystyle|f-\beta|(r)=|f|(r), \,\, \forall  r>\max (r_{n_{0}},r_{\beta}).$ } \ It follows that  $f-\beta$ has only simple zeros  in $\mathbb{C}_{p} \backslash d(0, \max (r_{n_{0}},r_{\beta}))$.  
		\ This means that all the possible multiple zeros of  $f-\beta $ lie in $d(0, \max (r_{n_{0}},r_{\beta}))$ and are therefore  finitely many. 
	\ Using Corollary 2.4, \ we complete the proof of Theorem 2.9. \qquad\qquad\quad \sqw

	%%%%%%%%%%%%%%%%%%%%%%%%%%%%%%%%%%
	\begin{corollary}
	Let $f$ be a $p$-adic entire function satisfying the conditions of  Theorem $2.9.$, then for any non-zero polynomial $P$, the $p$-adic meromorphic  function $ g=f/P$ is  pseudo-prime.
\end{corollary}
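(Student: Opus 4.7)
The plan is to apply Theorem $2.1$ to $g = f/P$. Three things must be checked: (i) $g$ is transcendental meromorphic; (ii) $g$ has only finitely many non-simple poles; (iii) for every $\beta\in\mathbb{C}_p$, $g-\beta$ has only finitely many non-simple zeros.

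Items (i) and (ii) should be immediate. For (i), if $g$ were rational then $f=gP$ would be a rational entire function, hence a polynomial, contradicting the transcendence of $f$. For (ii), every pole of $g$ is a zero of the polynomial $P$, so $g$ has only finitely many poles altogether.

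The substance of the argument lies in (iii). Writing $g-\beta=(f-\beta P)/P$, a point which is not a zero of $P$ is a non-simple zero of $g-\beta$ exactly when it is a non-simple zero of the entire function $f-\beta P$. So it suffices to show that $f-\beta P$ admits only finitely many multiple zeros. My plan is to rerun the argument of Theorem $2.9$ on $f-\beta P$ in place of $f$. Setting $d=\deg P$, the coefficients of $f-\beta P$ and of $f$ agree at every index $>d$, while the transcendence of $f$ forces $|f|(r)/r^d\to\infty$. Hence, for $r$ larger than a threshold $R(\beta,P)$, the ultrametric inequality gives $|f-\beta P|(r)=|f|(r)$, and the index realising this maximum is the same for both series and exceeds $d$. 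The proof of Theorem $2.9$ then transfers almost verbatim to show that all zeros of $f-\beta P$ outside $d(0,R(\beta,P))$ are simple, so that $f-\beta P$ has only finitely many multiple zeros.

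The main obstacle is the bookkeeping in (iii): one must verify that the perturbation by $\beta P$, which affects only the coefficients at indices $\ell\leq d$, neither shifts the Newton-polygon maximum of $f$ nor produces a low-index coefficient large enough to compete with the dominant ones. Concretely, one bounds $|a_\ell-\beta p_\ell|\,r^\ell\leq\max(|a_\ell|,|\beta p_\ell|)\,r^\ell$ for $\ell\leq d$ and shows that this falls strictly below $|f|(r)$ once $r$ is taken large enough in terms of $\beta$ and $P$. Once this is done, Theorem $2.1$ applies directly to $g$ and yields the required pseudo-primeness.
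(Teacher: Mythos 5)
Your proposal is correct and follows essentially the same route as the paper: reduce to showing that $f-\beta P$ has finitely many multiple zeros by observing that $|f-\beta P|(r)=|f|(r)$ for all large $r$ (with the maximum attained at the same indices, all $>\deg P$), so the zero-counting argument from the proof of Theorem 2.9 applies unchanged, and then conclude via Theorem 2.1. Your version merely spells out the Newton-polygon bookkeeping that the paper compresses into the phrase ``It follows, by Theorem 2.9.''
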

%%%%%%%%%%%%%%%%%%%%%%%%%%%%%%%%%%%
\begin{proof}
	Note first that we may suppose that $f$ and $P$ have no common zeros. It is clear that $g$ has finitely many poles.  Now let   $\beta \in \mathbb{C}_{p}$. We see that the zeros of $g-\beta$ are the same as those of$f-\beta P$. Moreover, there exists $r_{\beta}>0$ such that for every $r>r_{\beta}$ we have: 
	$\displaystyle |f-\beta P|(r)= |f|(r).$ It follows, by Theorem 2.9,  that the function $f-\beta P$ (and therefore also $g-\beta$) has at most a finite number of multiple zeros.
	Thus by Theorem $2.1.$  the meromorphic function $g$ is pseudo-prime.

\end{proof} 
%%%%%%%%%%%%%%%%%%%%%%

%%%%%%%%%%%%%%%%%%%%%%%%%%%%%%%%%%%%%%ù	
\begin{corollary}
Let $f(x)=\sum_{n\geq N}a_{n}x^{n}$ and $g(x)=\sum_{n\geq N}b_{n}x^{n} $ be two $p$-adic entire functions such that $a_{n} b_{n}\neq 0 , \forall n\geq N $. Let  $n_{0} \geq N $ be an integer such that the sequences  $(|a_{n}/a_{n+1}|)_{n\geq n_{0}} $ and $(|b_{n}/b_{n+1}|)_{n\geq n_{0}} $ are strictly increasing and unbounded.\ Suppose moreover that $\lim_{r\to+\infty} (|f|(r)/|g|(r))=+\infty$. \ Then the meromorphic function \ $h=f/g$ \ is pseudo-prime.     
\end{corollary}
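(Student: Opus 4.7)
The plan is to verify the hypotheses of Theorem 2.1 for $h=f/g$. First, since $g$ satisfies the hypothesis of Theorem 2.9, the proof of that theorem shows that all zeros of $g$ outside a fixed disk $d(0,r_{n_{0}})$ are simple. Consequently the poles of $h$ (which occur at the zeros of $g$ whose order exceeds that of $f$) have order at most one outside that disk, so $h$ has only finitely many multiple poles. We may also assume $h$ is transcendental, since a rational $h$ cannot be written as the composition of two transcendental meromorphic functions and is automatically pseudo-prime.

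The core step is to show that, for every $\beta\in\mathbb{C}_{p}$, the entire function $f-\beta g=\sum_{n\geq N}(a_{n}-\beta b_{n})x^{n}$ has only finitely many multiple zeros. The key claim behind this is $|a_{n}|/|b_{n}|\to +\infty$ as $n\to +\infty$. Suppose to the contrary that there exists a subsequence $(n_{k})$ and a constant $C>0$ with $|a_{n_{k}}|\leq C|b_{n_{k}}|$. Setting $r_{k}=|a_{n_{k}}/a_{n_{k}+1}|$, which tends to $+\infty$ by hypothesis, the computations in the proof of Theorem 2.9 yield $|f|(r_{k})=|a_{n_{k}}|r_{k}^{n_{k}}$, whereas $|g|(r_{k})\geq |b_{n_{k}}|r_{k}^{n_{k}}\geq |f|(r_{k})/C$. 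This contradicts $\lim_{r\to +\infty}|f|(r)/|g|(r)=+\infty$, establishing the claim.

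Given $\beta\in\mathbb{C}_{p}$, the claim furnishes an index $n_{1}=n_{1}(\beta)$ such that $|a_{n}|>|\beta||b_{n}|$ for every $n\geq n_{1}$. The ultrametric inequality then gives $|a_{n}-\beta b_{n}|=|a_{n}|$, hence $|(a_{n}-\beta b_{n})/(a_{n+1}-\beta b_{n+1})|=|a_{n}/a_{n+1}|$ for all $n\geq n_{1}$. So the coefficients of $f-\beta g$ satisfy, from $n_{1}$ onwards, the same strictly-increasing-unbounded ratio condition as those of $f$. Exactly as in the reasoning of Corollary 2.11, the argument in the proof of Theorem 2.9 therefore applies to $f-\beta g$ and shows that all of its zeros lying outside a sufficiently large disk (depending on $\beta$) are simple; hence $h-\beta=(f-\beta g)/g$ has only finitely many multiple zeros. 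Theorem 2.1 then delivers the pseudo-primeness of $h$.

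The main obstacle is precisely the bridge from the radial hypothesis $\lim_{r\to+\infty}|f|(r)/|g|(r)=+\infty$ to the coefficientwise statement $|a_{n}|/|b_{n}|\to +\infty$. The crucial point is to test the hypothesis along the specific radii $r_{k}=|a_{n_{k}}/a_{n_{k}+1}|$, at which the proof of Theorem 2.9 identifies $n_{k}$ as an index realising $|f|(r_{k})$, thereby converting the radial information directly into information about the coefficients themselves.
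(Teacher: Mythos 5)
Your proof is correct, and its skeleton matches the paper's: reduce to Theorem 2.1 by using Theorem 2.9 to control the multiple poles of $h$ (zeros of $g$) and the multiple zeros of $h-\beta$ (zeros of $f-\beta g$). Where you genuinely diverge is in the central step. The paper stays at the level of the maximum modulus: from $\lim_{r\to+\infty}|f|(r)/|g|(r)=+\infty$ it gets $|f-\beta g|(r)=|f|(r)$ for all $r>r_\beta$, and then the conclusion rests on re-entering the proof of Theorem 2.9 and observing (via Lemma 2.10) that equality of maximum moduli on large circles transfers the ``at most one zero per circle'' property from $f$ to $f-\beta g$. You instead descend to the coefficients: testing the radial hypothesis at the radii $r_k=|a_{n_k}/a_{n_k+1}|$, where the computations of Theorem 2.9 give $|f|(r_k)=|a_{n_k}|r_k^{n_k}$, you extract the coefficientwise statement $|a_n|/|b_n|\to+\infty$, whence $|a_n-\beta b_n|=|a_n|$ for large $n$ and $f-\beta g$ itself eventually satisfies the increasing-ratio hypothesis. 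This buys you the ability to invoke Theorem 2.9 essentially as a black box applied to $f-\beta g$, at the cost of the extra subsequence argument; the paper's route is shorter but its ``it follows by Theorem 2.9'' conceals the Newton-polygon transfer you make explicit. Two small points to tidy: $f-\beta g$ may have vanishing low-order coefficients, so one applies the tail of Theorem 2.9's argument rather than its literal statement (Corollary 2.11 has the same feature), and your dismissal of the case where $h$ is rational is asserted rather than proved, though the paper does not address that case at all.
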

%%%%%%%%%%%%%%%%%%ù	
\begin{proof} Note first that we may suppose that the entire functions $f$ and $g$ have no common zeros. By Theorem 2.9, we see that the entire functions $f$ and  $g$ have at most finitely many multiple zeros. Hence the meromorphic function $h=f/g$ has at most finitely many multiple zeros and poles.
	
	 Now let   $\beta \in \mathbb{C}_{p}$. We see that the zeros of $h-\beta$ are the same as those of$f-\beta g$. Moreover, as $\lim_{r\to+\infty} (|f|(r)/|g|(r))=+\infty$,  there exists $r_{\beta}>0$ such that for every $r>r_{\beta}$ we have: 
	 $\displaystyle |f-\beta g|(r)= |f|(r).$ It follows, by Theorem 2.9,  that the function $f-\beta g$ (and therefore also $h-\beta$) has at most a finite number of multiple zeros.
	 Thus, by Theorem $2.1$,  the meromorphic function $h$ is pseudo-prime.

\end{proof} 	 
%%%%%%%%%%%%%%%%%%%%%%%%%%%%%%%%%%
In what follows, we will provide examples of meromorphic p-adic functions satisfying the conditions of Corollary 2.12. \ Let's first recall that, given a real number $x$, we call integer part of $x$ and we denote by $E(x)$ the unique integer such that $E(x)\leq x <E(x)+1$. \ \ It is easily shown that:

\medskip

\begin{lemma} 
	
For all real numbers $x$ and $y$, we have: 

\centerline{$ E(x-y)\leq E(x)-E(y)\leq E(x-y)+1.$}
\end{lemma}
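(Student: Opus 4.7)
The plan is to derive both inequalities directly from the defining double inequality $E(t)\leq t<E(t)+1$ applied to $t=x$, $t=y$, and $t=x-y$, then to pass from strict inequalities between reals to non-strict inequalities between integers.

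First I would write down $E(x)\leq x<E(x)+1$ and $E(y)\leq y<E(y)+1$. Negating the second gives $-E(y)-1<-y\leq -E(y)$, and adding to the first yields the two-sided bound
\[
E(x)-E(y)-1 \;<\; x-y \;<\; E(x)-E(y)+1.
\]
So $x-y$ lies in an open interval of length $2$ around the integer $E(x)-E(y)$.

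Next, I would combine this with $E(x-y)\leq x-y<E(x-y)+1$ and read off the two required inequalities separately. For the upper bound, from $E(x)-E(y)-1<x-y<E(x-y)+1$ I get $E(x)-E(y)<E(x-y)+2$, and since both sides are integers this forces $E(x)-E(y)\leq E(x-y)+1$. For the lower bound, from $E(x-y)\leq x-y<E(x)-E(y)+1$ I get $E(x-y)<E(x)-E(y)+1$, again giving $E(x-y)\leq E(x)-E(y)$ by integrality.

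There is no real obstacle here; the only subtle point is making sure the transition from a strict real inequality $A<B+1$ (with $A,B\in\mathbb{Z}$) to the non-strict integer inequality $A\leq B$ is applied correctly in both directions, which is why I keep careful track of which inequalities are strict and which are not when adding the bounds for $x$ and $-y$.
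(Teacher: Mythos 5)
Your proof is correct: the paper states this lemma without proof (``It is easily shown that\ldots''), and your argument --- adding the defining bounds $E(x)\leq x<E(x)+1$ and $-E(y)-1<-y\leq -E(y)$, then using integrality to convert the strict real inequalities into the stated non-strict integer ones --- is exactly the standard verification the authors had in mind. The careful tracking of which inequalities are strict is handled correctly in both directions.
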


\begin{proposition} 
	Let $N$ be an integer$\geq 3$ and let $\alpha, \beta\in\mathbb{C}_{p}$ be such that $|\beta|<|\alpha|<1$. Let $f$ and $g$ the functions defined by  $ f(x)=\sum_{n\geq0}\alpha^{E((n/N)^N)}x^n$ and 
	$g(x)=\sum_{n\geq0}\beta^{E((n/N)^N)}x^n$. The meromorphic function $h={f}/{g}$ is a pseudo-prime.
\end{proposition}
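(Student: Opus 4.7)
\medskip

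\noindent\textbf{Proof plan.}
The strategy is to verify the three hypotheses of Corollary 2.12 for the pair $(f,g)$. All coefficients are nonzero (since $\alpha,\beta\neq 0$), and $|a_n|^{1/n}=|\alpha|^{E((n/N)^N)/n}\to 0$ because $E((n/N)^N)/n$ grows like $n^{N-1}/N^N$ and $|\alpha|<1$; the same holds for $g$, so $f,g\in\mathcal{A}(\mathbb{C}_p)$ and the setting of Corollary 2.12 applies.

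Setting $d_n:=E\bigl(((n+1)/N)^N\bigr)-E\bigl((n/N)^N\bigr)\geq 0$, one has
\[
\Bigl|\frac{a_n}{a_{n+1}}\Bigr|=|\alpha|^{-d_n}\quad\text{and}\quad\Bigl|\frac{b_n}{b_{n+1}}\Bigr|=|\beta|^{-d_n},
\]
so both ratio sequences are governed by the single integer sequence $(d_n)$. To handle $(d_n)$ I would write $\psi(n)=((n+1)/N)^N-(n/N)^N$ and use Lemma 2.13 to get the framing $E(\psi(n))\leq d_n\leq E(\psi(n))+1$. Since a direct expansion of $(n+1)^N-n^N$ yields $\psi(n)\sim n^{N-1}/N^{N-1}\to+\infty$, this already forces $d_n\to+\infty$, hence unboundedness of both ratio sequences. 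For strict monotonicity, applying Lemma 2.13 once more gives
\[
d_{n+1}-d_n\geq E\bigl(\psi(n+1)-\psi(n)\bigr)-1,
\]
and the second-difference expansion shows $\psi(n+1)-\psi(n)\sim \tfrac{N-1}{N}(n/N)^{N-2}$, which tends to $+\infty$ precisely because $N\geq 3$. Thus $d_{n+1}>d_n$ for all $n$ beyond some $n_0$.

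For the third hypothesis, let $n_2(r)$ be the index at which $|g|(r)=\max_n|b_n|r^n$ is attained (the $\nu$ of Lemma 2.10); since $g$ is a transcendental entire function, Lemma 2.10 forces $n_2(r)\to+\infty$ as $r\to+\infty$. Lower-bounding $|f|(r)$ by its term of index $n_2(r)$ gives
\[
\frac{|f|(r)}{|g|(r)}\;\geq\;\frac{|a_{n_2(r)}|\,r^{n_2(r)}}{|b_{n_2(r)}|\,r^{n_2(r)}}\;=\;\Bigl(\frac{|\alpha|}{|\beta|}\Bigr)^{E((n_2(r)/N)^N)},
\]
which tends to $+\infty$ because $|\alpha|/|\beta|>1$ and the exponent diverges. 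All three hypotheses of Corollary 2.12 are then in place, so $h=f/g$ is pseudo-prime.

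The step I expect to be the main obstacle is the strict monotonicity of $(d_n)$: each application of Lemma 2.13 contributes a $\pm 1$ slack from the floor function, so one needs the raw second difference of $(n/N)^N$ eventually to dominate those accumulated errors, and this is exactly what the assumption $N\geq 3$ secures (for $N=2$ the second difference is bounded and the argument breaks). The remaining ingredients are routine once the behaviour of $(d_n)$ is understood.
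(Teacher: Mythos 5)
Your proof is correct, and its core computation --- reducing strict monotonicity of $|a_n/a_{n+1}|=|\alpha|^{-d_n}$ via Lemma 2.13 to the fact that the second difference of $(n/N)^N$ eventually exceeds $2$, which is exactly where $N\geq 3$ enters --- is the same as the paper's argument. You are in fact more complete than the published proof, which only verifies the monotonicity condition (for $f$, with the same argument tacitly applying to $g$) and then invokes Theorem 2.9, leaving unverified both the unboundedness of the ratio sequences and the hypothesis $\lim_{r\to+\infty}|f|(r)/|g|(r)=+\infty$ required by Corollary 2.12; you check the latter explicitly with the clean bound $|f|(r)/|g|(r)\geq(|\alpha|/|\beta|)^{E((n_2(r)/N)^N)}$.
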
 
\medskip
\begin{proof}
	We have \ $f(x)=\sum_{n\geq0}a_nx^n$, where \ $\displaystyle a_n=\alpha^{E((n/N)^N)}$ for every $n\geq0$. 
	
	\noindent We easily check that, $\lim_{n\to+\infty}|a_n|r^n= \lim_{n\to+\infty}|\alpha^{E((n/N)^N)}|r^n=0$,  for every $r> 0$;  which means that $f$ is an entire function in $\mathbb{C}_{p}$. 
	
	\noindent Let us now show that, if $n_0$ is an integer $\geq({2N^{N-1}}/({N-1}))^{{1}/({N-2})}$, \ the sequence $(|{a_n}/{a_{n+1}}|)_{n\geq n_0}$ is strictly increasing.
	
		\noindent For every $n\geq0$, we have: \ \  $|{a_n}/{a_{n+1}}|=({1}/{|\alpha|})^{E(({n+1}/{N})^N)-E(({n}/{N})^N)}$.
		
		\medskip
		
		\noindent As the real function $x\mapsto({1}/{|\alpha|})^x$ \ is strictly increasing, it follows by Lemma 2.10. that:
	\begin{equation}
\biggl(\dfrac{1}{|\alpha|}\biggr)^{E((\frac{n+1}{N})^N-(\frac{n}{N})^N)}\leq
\biggl|\dfrac{a_n}{a_{n+1}}\biggr|\leq\biggl(\dfrac{1}{|\alpha|}\biggr)^{E((\frac{n+1}{N})^N-(\frac{n}{N})^N)+1}
	\end{equation}
		
		\noindent In the same way, we have:	
		\begin{equation}	\biggl(\dfrac{1}{|\alpha|}\biggr)^{E((\frac{n+2}{N})^N-(\frac{n+1}{N})^N)}\leq\biggl|\dfrac{a_n+1}{a_{n+2}}\biggr|\leq\biggl(\dfrac{1}{|\alpha|}\biggr)^{E((\frac{n+2}{N})^N-(\frac{n+1}{N})^N)+1}
		\end{equation}
		
	\noindent It follows from these last two inequalities that:	
	\begin{equation}	
\biggl|\dfrac{a_{n+1}}{a_{n+2}}\biggr|- \biggl|\dfrac{a_n}{a_{n+1}}\biggr|\geq \biggl(\dfrac{1}{|\alpha|}\biggr)^{E((\frac{n+2}{N})^N-(\frac{n+1}{N})^N)}-\biggl(\dfrac{1}{|\alpha|}\biggr)^{E((\frac{n+1}{N})^N-(\frac{n}{N})^N)+1}
	\end{equation}
	
\medskip

But, by Lemma 2.10., We have: 
	
\noindent$\displaystyle E((\frac{n+2}{N})^N-(\frac{n+1}{N})^N)-E((\frac{n+1}{N})^N-(\frac{n}{N})^N)-1\geq$ 

\noindent$\displaystyle \qquad\qquad\qquad\qquad\qquad E([(\frac{n+2}{N})^N-(\frac{n+1}{N})^N]-[(\frac{n+1}{N})^N-(\frac{n}{N})^N])-1$.

	As :\ \ $\displaystyle (\frac{n+2}{N})^N-(\frac{n+1}{N})^N   = \dfrac{1}{N^N}\sum_{i=0}^{N-1}(n+2)^i(n+1)^{N-i-1}$ and

	$\displaystyle (\frac{n+1}{N})^N-(\frac{n}{N})^N=
	\dfrac{1}{N^N}\sum_{i=0}^{N-1}n^i(n+1)^{N-i-1}$, \ we see that:

	\noindent$ [(\frac{n+2}{N})^N-(\frac{n+1}{N})^N]-[(\frac{n+1}{N})^N-(\frac{n}{N})^N]=\dfrac{1}{N^N}\sum_{i=0}^{N-1}(n+1)^{N-i-1}[(n+2)^i-n^i]$. 	
	\noindent Hence:
	
	\noindent$[(\frac{n+2}{N})^N-(\frac{n+1}{N})^N]-[(\frac{n+1}{N})^N-(\frac{n}{N})^N]=$
	
	\qquad	\qquad	\qquad	\qquad$\dfrac{2}{N^N}\sum_{i=0}^{N-1}(n+1)^{N-i-1}\sum_{j=0}^{i-1}(n+2)^jn^{i-j-1}$.
	
It follows that:

\quad$[(\frac{n+2}{N})^N-(\frac{n+1}{N})^N]-[(\frac{n+1}{N})^N-(\frac{n}{N})^N]\geq$

\quad\qquad	\qquad	\qquad\qquad\qquad$\dfrac{2}{N^N}\sum_{i=0}^{N-1}n^{N-i-1}\sum_{j=0}^{i-1}n^jn^{i-j-1}$,

then that:

\qquad\qquad$[(\frac{n+2}{N})^N-(\frac{n+1}{N})^N]-[(\frac{n+1}{N})^N-(\frac{n}{N})^N]\geq\dfrac{2n^{N-2}}{N^N}\sum_{i=0}^{N-1}i$,

and finally:

\qquad\qquad$[(\frac{n+2}{N})^N-(\frac{n+1}{N})^N]-[(\frac{n+1}{N})^N-(\frac{n}{N})^N]\geq\dfrac{n^{N-2}(N-1)}{N^{N-1}}$.

It follows that for every $n\geq n_0$, we have:

\qquad\qquad$[(\frac{n+2}{N})^N-(\frac{n+1}{N})^N]-[(\frac{n+1}{N})^N-(\frac{n}{N})^N]\geq2$.

Consequently for every $n\geq n_0$, we have:

 \qquad\qquad$E((\frac{n+2}{N})^N-(\frac{n+1}{N})^N)-E((\frac{n+1}{N})^N-(\frac{n}{N})^N)-1>0$.
 
 It follows that  for every $n\geq n_0$, we have:
 $\biggl|\dfrac{a_{n+1}}{a_{n+2}}\biggr|> \biggl|\dfrac{a_n}{a_{n+1}}\biggr|$. 
 
 \medskip
 
 We complete the proof by applying Theorem 2.9.
\end{proof} 
%%%%%%%%%%%%%%%%%%%%%%%%%%%%%%%%
\bigskip
In what follows, one aims to show that, given a $p$-adic transcendental entire function  $f$, it is always easy to transform it into a prime entire  function. For this, most often, we have just to add, or multiply  $f$ by an affinity.

%%%%%%%%%%%%%%%%%%%%%%%%%%%

\begin{theorem}\quad
	Let $f$ be a $p$-adic  transcendental  entire function. Then the set $\displaystyle \{a\in \mathbb{C}_{p} ;\, f(x) -ax \,\,\,is \, not \,prime\} $
	is at most a countable set.
\end{theorem}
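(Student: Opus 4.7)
The plan is to invoke Theorem~2.6: $F_{a}(x):=f(x)-ax$ is prime as soon as, for every $\beta\in\mathbb{C}_{p}$, the function $F_{a}-\beta$ admits at most one multiple zero. Thus it suffices to show that the set
\[
E:=\{a\in\mathbb{C}_{p}\,:\,\exists\,\beta\in\mathbb{C}_{p},\ \exists\,x_{1}\neq x_{2}\text{ both multiple zeros of }F_{a}-\beta\}
\]
is at most countable. Writing out $F_{a}(x_{i})=\beta$ and $F_{a}'(x_{i})=0$ for $i=1,2$, the conditions amount to $f'(x_{1})=f'(x_{2})=a$ together with $f(x_{1})-f(x_{2})=(x_{1}-x_{2})\,a$. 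Consequently, $E$ is the image under $(x_{1},x_{2})\mapsto f'(x_{1})$ of
\[
\mathcal{S}:=\{(x_{1},x_{2})\in\mathbb{C}_{p}^{2}\,:\,x_{1}\neq x_{2},\ f'(x_{1})=f'(x_{2}),\ f(x_{1})-f(x_{2})=(x_{1}-x_{2})f'(x_{1})\},
\]
and the task reduces to showing $\mathcal{S}$ is at most countable.

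Introduce the two entire functions on $\mathbb{C}_{p}^{2}$,
\[
H_{1}(x_{1},x_{2})=f'(x_{1})-f'(x_{2}),\qquad H_{2}(x_{1},x_{2})=f(x_{1})-f(x_{2})-(x_{1}-x_{2})f'(x_{1}),
\]
both of which vanish on the diagonal $\Delta$. Taylor expanding $f$ around $x_{1}$ yields $H_{1}=(x_{1}-x_{2})\,g_{1}$ and $H_{2}=(x_{1}-x_{2})^{2}\,g_{2}$, with $g_{1},g_{2}$ entire on $\mathbb{C}_{p}^{2}$ and satisfying $g_{1}(x,x)=f''(x)$, $g_{2}(x,x)=\tfrac{1}{2}f''(x)$. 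Off $\Delta$, the set $\mathcal{S}$ coincides with the common zero locus $Z:=\{g_{1}=g_{2}=0\}$.

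The key step is to prove that $Z$ contains no $1$-dimensional analytic component. Suppose $V\subset Z$ were such a component. If $V$ is contained in a vertical fiber $\{x_{1}=c\}$, then $f'(x_{2})=f'(c)$ holds on a non-discrete set of $x_{2}$, so $f'$ is constant, contradicting the transcendence of $f$. Otherwise, parametrize $V$ locally by $x_{2}=\gamma(x_{1})$ with $\gamma(x_{1})\neq x_{1}$; differentiating the identity $f(x_{1})-f(\gamma(x_{1}))=(x_{1}-\gamma(x_{1}))\,f'(x_{1})$ with respect to $x_{1}$ and substituting $f'(\gamma(x_{1}))=f'(x_{1})$ gives $(x_{1}-\gamma(x_{1}))\,f''(x_{1})=0$. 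Hence $f''$ vanishes on the projection of $V$, and the $p$-adic identity principle forces $f''\equiv 0$, again a contradiction.

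Having excluded $1$-dimensional components, $Z$ is a $0$-dimensional analytic subset of $\mathbb{C}_{p}^{2}$, and the remaining --- and, I expect, the most delicate --- step is deducing that $Z$ is at most countable. My plan here is to apply the $p$-adic Weierstrass preparation theorem inside each polydisc $d(0,r)\times d(0,r)$, writing $g_{1}$ and $g_{2}$ (up to units) as distinguished polynomials in $x_{2}$; their resultant with respect to $x_{2}$ is an entire function in $x_{1}$ that is not identically zero (otherwise $g_{1}$ and $g_{2}$ would share an analytic factor, creating a $1$-dimensional component of $Z$), and hence has a discrete zero set in $d(0,r)$, over each of which only finitely many $x_{2}$-values lie in $Z$. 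Exhausting $\mathbb{C}_{p}^{2}$ by a countable family of such polydiscs shows that $Z$ is at most countable, and therefore so is $E\supseteq\{a\in\mathbb{C}_{p}\,:\,F_{a}\text{ is not prime}\}$.
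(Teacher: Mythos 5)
Your reduction is correct and your overall route is genuinely different from the paper's. The paper also funnels everything through Theorem 2.6, but it produces the countable exceptional set by a purely one-variable argument (Lemma 2.16): it covers $\mathbb{C}_{p}\setminus Z(f'')$ by countably many disks $d_i$ on which $f'$ is bi-analytic, sets $D_i=f'(d_i)$, introduces $g(x)=f(x)-xf'(x)$ (which is exactly your $\beta$ as a function of the location of a multiple zero), and takes $E$ to be $f'(Z(f''))$ together with the union over nested pairs $D_i\subsetneqq D_j$ of the discrete coincidence sets of $g\circ(f'_i)^{-1}$ and $g\circ(f'_j)^{-1}$. For $a$ outside this set, two distinct multiple zeros would force $g\circ(f'_i)^{-1}\equiv g\circ(f'_j)^{-1}$ on $D_i$; since $g'=-xf''$, differentiating gives $(f'_i)^{-1}\equiv(f'_j)^{-1}$, a contradiction. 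That differentiation is the one-variable shadow of your computation $(x_1-\gamma(x_1))f''(x_1)=0$ along a branch of $Z$, so the two proofs share the same analytic identity but package it very differently: the paper stays entirely inside one-variable $p$-adic function theory, while you move to the common zero locus of two analytic functions on $\mathbb{C}_p^2$.

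The gap is in your final step, and it is precisely the load-bearing one: the claim that a ``$0$-dimensional'' common zero set $Z=\{g_1=g_2=0\}$ in $\mathbb{C}_p^2$ is countable. Over $\mathbb{C}_p$ the notions of ``$1$-dimensional analytic component'' and ``parametrize $V$ locally by $x_2=\gamma(x_1)$'' are not free; they come from the rigid-analytic Weierstrass preparation theorem in $\mathbb{C}_p\langle x_1,x_2\rangle$, which is exactly the tool you defer to at the end. So as written, your exclusion of $1$-dimensional components presupposes the machinery that your resultant argument is meant to supply, and the two halves must be stitched together: (a) on each polydisc $d(0,r)^2$ one needs $g_1,g_2$ to be $x_2$-distinguished, which generally requires a shear $x_2\mapsto x_2+cx_1$ first; (b) if the resultant of the resulting distinguished polynomials vanishes identically, one gets a common irreducible factor $Q$, and one must check that $Q\neq x_2-x_1$ (true, since $x_2-x_1\mid g_1$ would give $f''\equiv0$) and then run your differentiation argument on the branches of $\{Q=0\}$ away from the finitely many zeros of its discriminant to conclude $f''\equiv0$; (c) only then does the nonvanishing of the resultant give finiteness of $Z\cap d(0,r)^2$ and hence countability of $Z$ via the exhaustion $\mathbb{C}_p^2=\bigcup_n d(0,p^n)^2$. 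None of these steps fails, but they constitute the actual content of the proof, and you have flagged them as a plan rather than carried them out; the paper's construction avoids all of this by never leaving one variable.
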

%%%%%%%%%%%%%%%%%%%%%%%%%%%%%%%%%%%%%%%%%%%%%%%%%%%%%%%%%%%%%%%%%
To prove Theorem 2.15 we need the following lemma.  
%%%%%%%%%%%%%%%%%%%%%%%%%%%%%%%%%%%%%%%%
\begin{lemma}
	Let $ f\in \mathcal{A}(\mathbb{C}_{p})\backslash\mathbb{C}_{p}[X] $. Then there exists a countable set $E\subset \mathbb{C}_{p} $ such that for every $a\in \mathbb{C}_{p}\backslash E $ and every $b\in\mathbb{C}_{p}$, the function $f(x)-(ax+b)$ has at most one multiple zero.
\end{lemma}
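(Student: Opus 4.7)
The plan is to reformulate the condition as a self-intersection problem for the ``tangent-line'' map $T\colon \mathbb{C}_p \to \mathbb{C}_p^2$, $T(\alpha) = (f'(\alpha),\, f(\alpha) - \alpha f'(\alpha))$, show that the off-diagonal self-intersection set is zero-dimensional in $\mathbb{C}_p^2$, and conclude by a countability argument. To start, note that $\alpha$ is a multiple zero of $f(x) - (ax+b)$ exactly when $a = f'(\alpha)$ and $b = f(\alpha) - \alpha f'(\alpha)$; equivalently $(a,b) = T(\alpha)$. Hence $f(x) - (ax+b)$ admits two distinct multiple zeros $\alpha_1 \ne \alpha_2$ if and only if $T(\alpha_1) = T(\alpha_2)$. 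Setting
\[ S = \{(\alpha_1,\alpha_2) \in \mathbb{C}_p^2 : \alpha_1 \ne \alpha_2,\; T(\alpha_1) = T(\alpha_2)\}, \]
the set $E$ sought in the lemma equals $\{f'(\alpha_1) : (\alpha_1,\alpha_2) \in S\}$, so it suffices to show $S$ is at most countable.

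Observe that $S$ is the off-diagonal part of the common zero set in $\mathbb{C}_p^2$ of
\[ P(u,v) = f'(u) - f'(v) \quad \text{and} \quad R(u,v) = \bigl(f(u) - u f'(u)\bigr) - \bigl(f(v) - v f'(v)\bigr), \]
both of which vanish identically on the diagonal $\Delta = \{u=v\}$. The core step is to show that $\{P=R=0\}$ has no one-dimensional component other than $\Delta$. Suppose, for contradiction, that $t \mapsto (u(t), v(t))$ is a non-constant analytic parametrization on some disk $d \subset \mathbb{C}_p$ with $u(t) \ne v(t)$ and $P(u(t), v(t)) \equiv R(u(t), v(t)) \equiv 0$. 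From $P \equiv 0$ one has $f'(u) = f'(v)$, and substituting into $R \equiv 0$ gives $f(u) - f(v) = (u-v) f'(u)$. Differentiating this last identity in $t$ and simplifying via $f'(u) = f'(v)$ collapses all $f'$-terms and produces the key relation
\[ (u(t) - v(t))\, f''(u(t))\, u'(t) \equiv 0 \quad \text{on } d. \]
Since $u \ne v$ on $d$ and the ring of analytic functions on a $p$-adic disk is an integral domain, we deduce $f''(u(t))\, u'(t) \equiv 0$, hence either $u'(t) \equiv 0$ or $f''(u(t)) \equiv 0$. In the first case $u$ is constant, so $f'(v(t))$ equals the constant $f'(u_0)$, forcing $v(t)$ to lie in the discrete zero set of $f' - f'(u_0)$ and contradicting non-constancy of the parametrization. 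In the second case $u$ must be non-constant (otherwise we reduce to the first), so by the $p$-adic open mapping property recalled in the introduction $u(d)$ is a disk on which $f''$ vanishes, and the identity principle then yields $f'' \equiv 0$, i.e., $f$ is affine, contradicting $f \in \mathcal{A}(\mathbb{C}_p) \setminus \mathbb{C}_p[X]$.

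Consequently $\{P=R=0\} \setminus \Delta$ is a zero-dimensional closed analytic subset of $\mathbb{C}_p^2 \setminus \Delta$; by Noetherianity of the Tate algebra on each polydisk $d(0,r)^2$, the intersection $S \cap d(0,r)^2$ is finite, and letting $r \to +\infty$ shows $S$ is at most countable, whence so is $E$. The main obstacle is this dimension-reduction step: neither $P=0$ nor $R=0$ alone excludes off-diagonal one-parameter families (each already contains the full diagonal), and it is precisely the coupled use of both equations together with the common-slope identity $f'(u) = f'(v)$ that yields the decisive relation $(u-v) f''(u) u' \equiv 0$ and forces $f$ to be affine.
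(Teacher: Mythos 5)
Your reformulation is correct, and your decisive computation is, in disguise, the same one the paper uses: the second coordinate of your map $T(\alpha)=(f'(\alpha),\,f(\alpha)-\alpha f'(\alpha))$ is exactly the paper's auxiliary function $g(x)=f(x)-xf'(x)$, and your identity $(u-v)f''(u)u'\equiv 0$ is the same differentiation trick the paper performs via $g'(x)=-xf''(x)$, which turns the coincidence $g\circ(f_i')^{-1}=g\circ(f_j')^{-1}$ into $(f_i')^{-1}=(f_j')^{-1}$. Where the two arguments genuinely diverge is in how this is converted into countability of $E$. The paper stays elementary: it covers $\mathbb{C}_{p}\setminus Z(f'')$ by countably many disks on which $f'$ is bi-analytic, so that the multiple zeros of $f(x)-(ax+b)$ are precisely the branch values $(f_i')^{-1}(a)$, and the bad set is exhibited explicitly as $f'(Z(f''))$ together with the coincidence loci of countably many pairs of distinct analytic functions on disks, each such locus being countable by the identity principle. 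You instead globalize: you form the off-diagonal self-intersection set $S$ of $T$ in $\mathbb{C}_{p}^2$ and argue it is countable because $V(P,R)\setminus\Delta$ carries no one-dimensional piece. Your exclusion of such pieces is sound (both cases $u'\equiv 0$ and $f''\circ u\equiv 0$ are correctly dispatched, using that analytic functions on a disk form an integral domain and that a non-constant analytic map sends a disk onto a disk). The soft spot is the inference ``no off-diagonal analytic curve $\Rightarrow S\cap d(0,r)^2$ finite,'' which you discharge with a one-line appeal to Noetherianity of the Tate algebra. That appeal silently uses the decomposition of the affinoid $V(P,R)\cap d(0,r)^2$ into finitely many irreducible components, the finiteness of zero-dimensional affinoids, and the fact that a one-dimensional component not contained in $\Delta$ admits a local analytic parametrization by a disk avoiding $\Delta$. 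These facts are true, but none is available at the level of the elementary $p$-adic function theory the paper uses, so this step should be proved or referenced precisely; as written it is the one real gap in rigor. The trade-off is clear: your version makes the geometric content transparent and avoids the paper's bookkeeping with the nested disks $D_i$ and the exceptional sets $\Gamma$ and $\Delta_{ij}$, while the paper's version buys self-containedness, obtaining countability for free from its explicit construction.
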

%%%%%%%%%%%%%%%%%%%%%%%%%%%%%%%%%%%%%%%%%%%%%%%%%%%%%%
\begin{proof} 
	Let $Z(f^{''})$ be the set of zeros of  $f^{''}$. 
Since $\mathbb{C}_{p}$ is a separable space, there exists a countable family of disks $(d_i)_{i\ge1}$ such that $\mathbb{C}_{p}\setminus Z(f^{''})=\cup_{i\ge1}d_i$ and, for every $i\ge1$, the restriction $f'_i$ of $f'$ to $d_i$ is a bi-analytic function on $d_i$.  Then we have 
\ $\displaystyle\mathbb{C}_{p}=(\cup_{i\ge1} D_i) \cup f^{'}(Z(f^{''})).$

\bigskip

Let $g$ be the function defined on $\mathbb{C}_{p}$ by $g(x)=f(x)-xf'(x)$.
It should be noted that even if the family $(d_i)_{i\ge1}$ is chosen so that the disks $d_i$ are pairwise disjoint, there is no guarantee that the family $(D_i)_{i\ge1}$ retains this property.  In other words some of the disks  $D_i$ could be nested. To take account of this fact, let us set: \centerline{$\Gamma=\{(i,j)\in (\mathbb{N}^{\ast})^2 \ / \ D_{i}\subsetneqq D_{j} \ \text{and} \ g\circ(f'_i)^{-1}\neq  g\circ(f'_j)^{-1} \ \text{in} \  D_{i}\}$;}\\ 
$\Delta_{ij}=\{x\in D_i \ / \  g\circ(f'_i)^{-1}(x)=g\circ(f'_j)^{-1}(x) \}, \forall (i,j)\in\Gamma$ \
and $\Delta=\cup_{(i,j)\in\Gamma}\Delta_{ij}$.\\
Then $E=\mathbb{C}_{p}\setminus [(\cup_{i\ge1}D_i)\setminus[(f'(Z(f'')))\cup\Delta]]$ is a countable subset of $\mathbb{C}_{p}$. \ Indeed it is easily seen that $E\subset (f'(Z(f'')))\cup\Delta$.\\
 
Let $a\in\mathbb{C}_{p} \backslash E$. Let $I_a=\{i\in\mathbb{N}^{\ast} \ / \ a\in D_i \}$. \ Hence the disks $D_i$, for $i\in I_a\ $, are nested. We easily show that the set of multiple zeros of the function \ $f(x)-(ax+b)$ \ is equal to $A=\{(f'_i)^{-1}(a) \ / i\in I_a \ \text{and} \ f\circ(f'_i)^{-1}(a)=a(f'_i)^{-1}(a)+b\}$.\\
Suppose that $(f'_i)^{-1}(a)$ and $(f'_j)^{-1}(a)$ are two distinct elements of $A$.  We may assume that $D_{i}\subsetneqq D_{j}$. The fact that each of these elements is a solution of the equation $f(x)-ax=b$ implies that:
$f((f'_i)^{-1}(a)) -(f'_i)^{-1}(a)f'((f'_i)^{-1}(a))=f((f'_j)^{-1}(a)) -(f'_j)^{-1}(a)f'((f'_j)^{-1}(a))$ \ or, \  in other words, \ $g\circ (f'_i)^{-1}(a)=g\circ (f'_j)^{-1}(a)$. \ As $a\notin\Delta$, we deduce that $g\circ (f'_i)^{-1}(x)=g\circ (f'_j)^{-1}(x), \forall x\in D_i$. \\
 By derivation of this last equation, we have:  \ $(f'_i)^{-1}(x)=(f'_j)^{-1}(x),  \forall x\in D_i$ \ and particularly $(f'_i)^{-1}(a)=(f'_j)^{-1}(a)$, which is a contradiction. Hence the set $A$ admits at most one element. This completes the proof Lemma 2.16.
\end{proof}
%%%%%%%%%%%%%%%%%%%%%%%%%%%%%%%%%%%%%%%%%%
\noindent{\it Proof of Theorem 2.15 }
	
	Let $E$ be the countable set of Lemme 2.16. and let $a\in \mathbb{C}_{p}\backslash E$. \ Since $f$ is a  transcendental  entire function, we see that the function $h (x) = f (x) -ax$ is also a  transcendental  entire function. Lemma 2.16 then ensures that, for every $b\in\mathbb{C}_{p}$, \  the function $h (x) -b=f (x) -ax-b$ admits at most one multiple zero. Theorem 2.6 finally enables us to conclude that the function $f(x)-ax=h(x)$ is prime.
	
\begin{theorem}\quad
	Let $f$ be a $p$-adic  transcendental  entire function. Then the set $\displaystyle \{a\in \mathbb{C}_{p} ;\, f(x)(x-a) \,\,\,is \, not \,prime\} $
	is at most a countable set.
\end{theorem}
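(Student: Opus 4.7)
The plan is to adapt the proof of Theorem 2.15 to the multiplicative perturbation $h_a(x) := f(x)(x-a)$, which is transcendental entire whenever $f$ is. I aim to show that for every $a$ outside a suitable countable set $E \subset \mathbb{C}_p$, the function $h_a$ satisfies the hypotheses needed to invoke Theorem 2.6 (or a mild relaxation thereof) and is therefore prime.

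First I would compute $h_a'(x) = f'(x)(x-a) + f(x)$. A point $x_0$ with $f'(x_0) \neq 0$ is a multiple zero of $h_a - \beta$ if and only if $a = \psi(x_0)$ and $\beta = G(x_0)$, where
\[
\psi(x) := x + \frac{f(x)}{f'(x)}, \qquad G(x) := -\frac{f(x)^2}{f'(x)}.
\]
Thus two distinct such multiple zeros force the coincidence $\psi(x_1) = \psi(x_2)$ together with $G(x_1) = G(x_2)$, which is the direct analogue of the identity $f'(x_1) = f'(x_2)$ with matching values of $g = f - xf'$ in Lemma 2.16. I would then mirror the construction of that lemma with $\psi$ in place of $f'$ and $G$ in place of $g$: cover $\mathbb{C}_p \setminus Z(\psi')$ by a countable family of disks $(d_i)_{i \geq 1}$ on which $\psi$ is bi-analytic onto $D_i := \psi(d_i)$, form the nested-pair set $\Gamma = \{(i,j) : D_i \subsetneq D_j,\ G \circ \psi_i^{-1} \not\equiv G \circ \psi_j^{-1} \text{ on } D_i\}$ and its countable coincidence loci $\Delta_{ij}$, and take
\[
E := \psi(Z(f')) \cup \psi(Z(\psi')) \cup \bigcup_{(i,j) \in \Gamma} \Delta_{ij}.
\]
This set is countable, and the branch-matching argument of Lemma 2.16 shows that for $a \notin E$ and every $\beta \in \mathbb{C}_p$, the function $h_a - \beta$ has at most one multiple zero lying outside $Z(f')$.

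The main obstacle is the contribution of multiple zeros of $h_a - \beta$ coming from $Z(f')$: if $x_0 \in Z(f')$ satisfies $h_a'(x_0) = 0$, then $f(x_0) = 0$, so $x_0$ is already a multiple zero of $f$ and necessarily $\beta = 0$. These zeros are independent of $a$, so the hypothesis of Theorem 2.6 at $\beta = 0$ may fail when $f$ itself has several multiple zeros. To finish I would adapt the argument of Theorem 2.6 to the weaker input "$h_a - \beta$ has at most one multiple zero for every $\beta \neq 0$": any nontrivial factorization $h_a = \phi \circ \rho$ with $\rho$ polynomial of degree $d \geq 2$ and $\phi$ transcendental entire provides infinitely many $w \in Z(\phi')$; selecting such a $w$ with $\phi(w) \neq 0$ and with $\rho - w$ having $d$ simple roots produces $d \geq 2$ multiple zeros of $h_a - \phi(w)$, contradicting the above property. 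Combining this with the Theorem 2.5-style argument for left-primeness yields the primeness of $h_a$ for all $a \notin E$, which completes the proof.
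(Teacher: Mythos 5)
Your construction is essentially the paper's own proof. Your $\psi$ and $G$ are exactly the auxiliary functions $g(x)=x+f(x)/f'(x)$ and $h(x)=(x-g(x))f(x)=-f(x)^{2}/f'(x)$ of the paper's Lemma 2.18, and the covering of the complement of $S(\psi')$ by disks of bi-analyticity, the nested-pair set $\Gamma$, the coincidence loci $\Delta_{ij}$, the differentiation step (which rests on the identity $G'=-f\,\psi'$, the paper's relation (3)), and the conclusion via Theorem 2.6 all coincide with the paper's argument. You are also right, and more careful than the paper, in observing that the characterization ``$\zeta$ is a multiple zero of $(x-a)f(x)-\beta$ iff $a=\psi(\zeta)$ and $\beta=G(\zeta)$'' breaks down on $Z(f')$: at a multiple zero $\zeta$ of $f$ one gets a multiple zero of $h_a$ at level $\beta=0$ for \emph{every} $a$, a case the paper's Lemma 2.18 silently ignores.

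The one place where your write-up does not close is precisely this $\beta=0$ case. Your patch (requiring ``at most one multiple zero'' only for $\beta\neq0$, and choosing $w\in Z(\phi')$ with $\phi(w)\neq0$) repairs the right-primeness step, but the two prerequisites still need control at $\beta=0$: pseudo-primeness via Corollary 2.4 and left-primeness via Theorem 2.5 both assume that $h_a-\beta$ has only finitely many multiple zeros for \emph{every} $\beta$, and at $\beta=0$ the multiple zeros of $h_a$ contain all multiple zeros of $f$. If $f$ has infinitely many multiple zeros (e.g.\ $f=u^{2}$ with $u$ transcendental entire), these hypotheses fail for every $a$, and neither your argument nor the paper's yields even pseudo-primeness. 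So either one must assume that $f$ has finitely many multiple zeros, or the $\beta=0$ level must be handled by a separate argument; as it stands this is a genuine gap, though one you inherit from --- and at least make visible in --- the paper's own Lemma 2.18.
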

%%%%%%%%%%%%%%%%%%%%%%%%%%%%%%%%%%%%%%%%%%%%%%%%%%%%%%%%%%%%%%%%%
To prove Theorem 2.17 we need the following lemma.  
%%%%%%%%%%%%%%%%%%%%%%%%%%%%%%%%%%%%%%%%
\begin{lemma}
	Let $ f\in \mathcal{A}(\mathbb{C}_{p})\backslash\mathbb{C}_{p}[X] $. Then there exists a countable set $E'\subset \mathbb{C}_{p}$ such that for every $a\in \mathbb{C}_{p}\backslash E'$ and \ every \ $b\in\mathbb{C}_{p}$, the \ \ function $(x-a)f(x)-b$ has at most one multiple zero.
\end{lemma}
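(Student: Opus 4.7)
The plan is to transplant the proof of Lemma 2.16 to the multiplicative perturbation $F(x) := (x-a)f(x) - b$. First I translate the multiple-zero condition: $F(x) = 0$ and $F'(x) = 0$ give the system $(x-a)f(x) = b$ and $f(x) + (x-a)f'(x) = 0$. On the open set $U = \{f' \neq 0\}$, the second equation solves as $a = \theta(x)$ where $\theta(x) := x + f(x)/f'(x)$, and the first then becomes $b = \phi(x)$ where $\phi(x) := -f(x)^2/f'(x) = (x-\theta(x))f(x)$. So multiple zeros of $F$ in $U$ are exactly the $x$ satisfying $\theta(x) = a$ and $\phi(x) = b$: the analog of $f'$ from Lemma 2.16 is $\theta$, and the analog of $g$ is $\phi$.

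I would then apply to $\theta$ the bi-analytic decomposition machinery of Lemma 2.16: cover $U \setminus Z(\theta')$ by a countable family of disks $(d_i)_{i\ge 1}$ on each of which $\theta$ is bi-analytic, let $D_i = \theta(d_i)$, and carry over verbatim the construction of $\Gamma$, $\Delta_{ij}$ and $\Delta$ with $\phi$ in place of $g$. Then
\[
E' := \mathbb{C}_p \setminus \Big[\Big(\bigcup_{i \ge 1} D_i\Big) \setminus \big(\theta(Z(f') \cup Z(\theta')) \cup \Delta \cup Z(f)\big)\Big]
\]
is a countable subset of $\mathbb{C}_p$, exactly by the countability reasoning used in Lemma 2.16.

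The crucial identity, which replaces the computation $(g \circ (f')^{-1})' = -(f')^{-1}$ of Lemma 2.16, is
\[
\frac{\phi'(x)}{\theta'(x)} = -f(x),
\]
valid on $U$ because $\theta'$ and $-\phi'$ both carry the common factor $(2f'^2 - ff'')/f'^2$. Hence $\phi\circ(\theta|_{d_i})^{-1} \equiv \phi\circ(\theta|_{d_j})^{-1}$ on $D_i$ implies, by differentiation, $f\circ(\theta|_{d_i})^{-1} \equiv f\circ(\theta|_{d_j})^{-1}$. For $a\in\mathbb{C}_p\setminus E'$, if two distinct preimages $x_i\in d_i,\ x_j\in d_j$ of $a$ under $\theta$ satisfied $\phi(x_i)=\phi(x_j)=b$, then using $\phi(x)=(x-a)f(x)$ at $\theta(x)=a$ together with $f(x_i)=f(x_j)\neq 0$ (this is where $a\notin Z(f)$ is needed) one deduces $x_i-a=x_j-a$, contradicting the disjointness of $d_i$ and $d_j$.

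The main obstacle will be the edge-case analysis around $Z(f')$. Poles of $\theta$ lying in $Z(f')\setminus Z(f)$ never contribute multiple zeros of $F$, since there $F'(x)=f(x)\neq 0$; common zeros of $f$ and $f'$, where $\theta$ in fact extends analytically (the quotient $f/f'$ being removably zero), contribute multiple zeros of $F$ only for $b=0$, and must therefore be absorbed into the parametrization without enlarging $E'$ beyond countable. One also has to verify, as in the analogous step of Lemma 2.16, that $\mathbb{C}_p\setminus\bigcup_i D_i$ is covered by the countable image $\theta(Z(f')\cup Z(\theta'))$ — the $\theta$-analog of the surjectivity-up-to-critical-values of $f'$ used there. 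Once these technicalities are in place, the ensuing proof of Theorem 2.17 mirrors that of Theorem 2.15 via Theorem 2.6 and Corollary 2.4.
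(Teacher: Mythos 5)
Your proposal follows essentially the same route as the paper: your $\theta$ and $\phi$ are exactly the paper's auxiliary functions $g(x)=x+f(x)/f'(x)$ and $h(x)=(x-g(x))f(x)=-f(x)^2/f'(x)$, your identity $\phi'=-f\,\theta'$ is the paper's relation $h'=-fg'$, and the construction of $E'$ via the bi-analytic disks, the sets $\Gamma$ and $\Delta$, and the final ``at most one element'' argument is the same. One small correction: the set you must adjoin to $E'$ in order to guarantee $f(x_i)\neq 0$ is $\theta(Z(f))$ (equivalently $\bigcup_{i}Z(f\circ\theta_i^{-1})$, which is what the paper uses), not $Z(f)$ itself --- the hypothesis $a\notin Z(f)$ controls $f(a)$ rather than $f(\theta_i^{-1}(a))$; since $\theta(Z(f))$ is still countable, this does not affect the conclusion.
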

%%%%%%%%%%%%%%%%%%%%%%%%%%%%%%%%%%%%%%%%%%%%%%%%%%%%%%
\begin{proof} 	\quad The procedure  is very similar   to that  used  in the \  proof \ of  Lemma 2.16. We easily check that an  element \ \  $\zeta$  of  $\mathbb{C}_{p}$ \ \  is a multiple  zero of the  function  $(x-a)f(x)-b$ \ if and only if
	\ $\left\{\begin{array}{ll}
	a=g(\zeta)\\ 
	b=h(\zeta)\\
	\end{array}\right.$, \ \ where $g$ and $h$ are meromorphic functions, \ defined in $\mathbb{C}_{p}$, \ by:
	
\smallskip	
\noindent$\left\{\begin{array}{ll}
	g(x)=x+f(x)/f'(x)\\ 
	h(x)=(x-g(x))f(x)\\
	\end{array}\right.$ \qquad\qquad\qquad\qquad\qquad\qquad\qquad\qquad\qquad\qquad (1)

%	\ $\left\{\begin{array}{ll}
%	a=\zeta+f(\zeta)/f'(\zeta)\\ 
%	b=(\zeta-a)f(\zeta)\\
%	\end{array}\right.$,\\

	\medskip
Let \ $S(g')$ \ be the set of zeros and poles of  $g'$  \ and let $(d_i)_{i\ge1}$ be a countable family of disks  \ such that \ $\mathbb{C}_{p}\setminus S(g')=\cup_{i\ge1}d_i$ \ and that, \ for every $i\ge1$, \ \ the restriction \  $g_i$ \ of \  $g$ to $d_i$ is a bi-analytic function on $d_i$.  \  
 Then we have:  
	\ $\mathbb{C}_{p}=(\cup_{i\ge1} D_i) \cup g(S(g')), \ \text{where} \ D_i=g(d_i).$
	
	\medskip
	\noindent 
	As noted before, the disks $D_i$ are not necessarily pairwise disjoint. In other words,  some of them could be nested. To take account of this fact, let us set: 
	
	\centerline{$\Gamma=\{(i,j)\in (\mathbb{N}^{\ast})^2  /  D_{i}\subsetneqq D_{j} \ \text{and} \ h\circ g_i^{-1}\neq  h\circ g_j^{-1} \ \text{in} \  D_{i}\}$;}  
$\Delta_{ij}=\{x\in D_i  /   h\circ g_i^{-1}(x)=h\circ g_j^{-1}(x) \}, \forall (i,j)\in\Gamma$ \ and $\Delta=\cup_{(i,j)\in\Gamma}\Delta_{ij}$.\\
	Then \ $E'=\mathbb{C}_{p}\setminus [(\cup_{i\ge1}D_i)\setminus[(g(S(g')))\cup\Delta\cup(\cup_{i\ge1}Z(f\circ g_i^{-1}))]]$ \ is a \ countable subset of $\mathbb{C}_{p}$. 
\ Indeed, it is clear that \ $E'\subset(g(S(g')))\cup\Delta\cup(\cup_{i\ge1}Z(f\circ g_i^{-1}))]$.\\

		Let $a\in\mathbb{C}_{p} \backslash E'$. Let $I_a=\{i\in\mathbb{N}^{\ast} \ / \ a\in D_i \}$. \ Hence the disks $D_i$, for $i\in I_a\ $, are nested. We easily show, by relation (1), that the set of multiple zeros of the function $(x-a)f(x)-b$ \ is equal to $A=\{g_i^{-1}(a) \ / \ b=h\circ g_i^{-1}(a) \ \text{for} \ i\in I_a \}$.\\
	Suppose that $g_i^{-1}(a)$ and $g_j^{-1}(a)$ are two distinct elements of $A$. Hence, we have:  $h(g_i^{-1}(a))=h(g_j^{-1}(a))=b$. \ Assuming that $D_{i}\subsetneqq D_{j}$ and using the fact that $a\notin\Delta$, we have:
	
\noindent \ {$h\circ g_i^{-1}(x)=h\circ g_j^{-1}(x), \forall x\in D_i$} \qquad\qquad\qquad\qquad\qquad\qquad\qquad\qquad \ (2)

From relation (1), we have:

\noindent \ {$h'=-fg'$} \qquad\qquad\qquad\qquad\qquad\qquad\qquad\qquad \qquad\qquad\qquad\qquad\qquad (3) 
	 
	\noindent By derivation of relation (2) and using relation (3), we obtain:  
	
\noindent \ $f\circ g_i^{-1}(x)=f\circ g_j^{-1}(x),  \forall x\in D_i$ \qquad\qquad\qquad\qquad\qquad \qquad\qquad\qquad\quad \ (4) 	
	
\noindent Particularly, we have \ $f\circ g_i^{-1}(a)=f\circ g_j^{-1}(a).$ \ \
But, since  $a\in E'$, we have  \ $f\circ g_i^{-1}(a)=f\circ g_j^{-1}(a)\neq0$. \ Using this,\ we deduce from relation \ (1) \ that:\ $g_i^{-1}(a)=g_j^{-1}(a)$,  a contradiction.\ \ Hence the set $A$ admits at most one element. This completes the proof Lemma 2.18.
	\end{proof}
%%%%%%%%%%%%%%%%%%%%%%%%%%%%%%%%%%%%%%%%%%ù
\noindent{\it Proof of Theorem 2.17 }

Let $E$ be the countable set of Lemme 2.18 and let $a\in \mathbb{C}_{p}\backslash E'$. \ Since $f$ is a  transcendental  entire function, we see that the function $h (x)=f(x)(x-a)$ is also a  transcendental  entire function. Lemma 2.18 then ensures that, for every $b\in\mathbb{C}_{p}$, \  the function $h (x) -b=f(x)(x-a)-b$ admits at most one multiple zero. Theorem 2.6 finally enables us to conclude that the function $f(x)(x-a)=h(x)$ is prime.

\section{PERMUTABILITY OF ENTIRE FUNCTIONS}

In this part, we consider the question of permutability of $p$-adic meromorphic  functions. In other words, given two $p$-adic meromorphic functions $f$ and $g$, under what conditions can we have $f \circ g = g\circ f$? The very particular situation that we are going to study gives an idea of the extreme difficulty of exploring this problem in a general way. \ More precisely, we will show the following result:

\begin{theorem}\quad Let $P$ and $f$ be respectively a non-constant polynomial and a transcendental entire function on $\mathbb{C}_{p}$ such that $P\circ f=f\circ P$. Then the polynomial $P$ is of one of the following forms:

\noindent {\it \ i}) \ $P(x)=x$,

\noindent {\it ii}) \ $P(x)=ax+b$, where $a, b\in\mathbb{C}_{p}$ such that $a$ is an $n$-th root of unity for some non-zero integer $n$. 
\end{theorem}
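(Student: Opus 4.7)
The plan is to first use a growth comparison to force $\deg P = 1$, and then classify the resulting linear polynomials directly. For the first step, I would apply the standard composition formula $|h_1 \circ h_2|(r) = |h_1|(|h_2|(r))$ for $p$-adic analytic functions (valid for $r$ large enough that $|h_2|(r) > |h_2(0)|$) to both sides of $P \circ f = f \circ P$. Writing $d = \deg P$ and $a_d$ for its leading coefficient, one has $|P|(r) = |a_d|\, r^d$ for $r$ large, and $|f|(r) \to \infty$ because $f$ is transcendental. The commutation therefore yields
\[
|a_d|\,|f|(r)^d \;=\; |f|\bigl(|a_d|\, r^d\bigr)
\]
for all sufficiently large $r$. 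Setting $u = \log r$, $c = \log|a_d|$ and $L(u) = \log|f|(e^u)$, this reads $d\,L(u) + c = L(du + c)$.

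The key fact is that transcendence of $f$ forces $L(u)/u \to \infty$ as $u \to \infty$: for every $m$ with $a_m \ne 0$ the bound $|f|(r) \ge |a_m|\, r^m$ gives $L(u)/u \ge m + o(1)$, and $m$ can be taken arbitrarily large. If $d \ge 2$, I iterate the functional equation starting from some large $u_0$: with $u_{n+1} = d\, u_n + c$ one finds $L(u_n) - u_n = d^n\bigl(L(u_0)-u_0\bigr)$ and $u_n = d^n\bigl(u_0 + c/(d-1)\bigr) - c/(d-1)$, so $L(u_n)/u_n$ converges to the \emph{finite} value $1 + (L(u_0)-u_0)/(u_0 + c/(d-1))$, contradicting $L(u_n)/u_n \to \infty$. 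Hence $d = 1$ and $P(x) = ax + b$ with $a \ne 0$.

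For the second step I would split on $a$. If $a = 1$, the commutation becomes $f(x+b) = f(x) + b$; assuming $b \ne 0$ and differentiating gives $f'(x+b) = f'(x)$. Since $f$ is transcendental so is $f'$, and by the no-exceptional-value property recalled in the introduction, $f'$ admits at least one zero $x_0$. Then $f'$ vanishes on $\{x_0 + nb : n \in \mathbb{Z}\}$, an infinite set contained in the bounded disk $d(0,\max(|x_0|,|b|))$ since $|n|_p \le 1$. But a nonzero $p$-adic entire function has only finitely many zeros in any bounded disk (Lemma 2.10), a contradiction, so $b = 0$ and we are in case (i). If $a \ne 1$, I conjugate $P$ by the translation sending the fixed point $x_0 = b/(1-a)$ to the origin and set $\tilde f(y) = f(y+x_0) - x_0$; the relation becomes $a\,\tilde f(y) = \tilde f(ay)$. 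Expanding $\tilde f(y) = \sum_{n \ge 0} \tilde c_n y^n$ and identifying coefficients yields $(a^n - a)\,\tilde c_n = 0$, so $\tilde c_n \ne 0$ forces $n = 1$ or $a^{n-1} = 1$. Transcendence of $f$ requires infinitely many such indices $n \ge 2$, which forces $a$ to have finite multiplicative order, i.e.\ to be an $n$-th root of unity, yielding case (ii).

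The main obstacle is Step 1: it rests both on the $p$-adic composition formula for the maximum modulus and on the transcendence estimate $L(u)/u \to \infty$, combined with a careful iteration of the functional equation to exclude $d \ge 2$. Once $\deg P = 1$ is established, Step 2 is a short coefficient comparison together with the periodicity observation on $f'$.
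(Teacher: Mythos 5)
Your proof is correct and follows the same three-part architecture as the paper's: a growth argument forcing $\deg P=1$, the periodicity-of-$f'$ argument when $a=1$, and conjugation to the fixed point followed by coefficient comparison when $a\neq 1$; the last two parts coincide with the paper's almost verbatim. The only genuine difference is in the first step. The paper invokes its Lemma 3.3 (if $|f|(\alpha r^{n})<\beta(|f|(r))^{n}$ for large $r$, then $f$ is a polynomial), whose proof iterates the inequality along $r_{k+1}=\alpha r_{k}^{n}$, interpolates between consecutive $r_{k}$ via the logarithmic convexity of $\log r\mapsto\log|f|(r)$, and then concludes through Lemma 3.2 that a bound $|f|(r)\le cr^{d}$ forces $f^{(d+1)}\equiv 0$. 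You instead note that transcendence gives $L(u)/u\to\infty$ for $L(u)=\log|f|(e^{u})$, and iterate the exact functional equation $dL(u)+c=L(du+c)$ to produce a sequence $u_{n}\to\infty$ along which $L(u_{n})/u_{n}$ remains bounded. This is the same underlying iteration, but your version is leaner: it only needs the relation along a single orbit, so it dispenses with the convexity interpolation and the derivative estimate, at the cost of not isolating a reusable polynomiality criterion like Lemma 3.3. The remaining steps are sound as written; in particular you correctly justify that the zeros $x_{0}+nb$ of $f'$ are pairwise distinct and all lie in $d(0,\max(|x_{0}|,|b|))$ because $|n|_{p}\le 1$, which is exactly the point the paper makes by choosing $|\zeta|>|b|$.
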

\medskip

To prove this theorem, we neen the following lemmas:

\medskip

\begin{lemma}
Let $f\in \mathcal{A}(\mathbb{C}_{p})$. Then the two following assertions are equivalent:

\noindent {\it \ i}) \ $f$ is a polynomial,

\noindent {\it ii}) \ there exist $c>0$ and  $d\geq0$  such that  $|f|(r)\leq cr^{d}$,    $r\to+\infty$. 

\end{lemma}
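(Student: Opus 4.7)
The plan is to prove the two implications separately, with both being direct consequences of the definition $|f|(r)=\max_{n\ge 0}|a_n|r^n$.

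For $(i)\Rightarrow(ii)$, I would write $f(x)=\sum_{n=0}^{d}a_n x^n$ with $a_d\neq 0$, so $\deg f=d$. Then $|f|(r)=\max_{0\le n\le d}|a_n|r^n$. For $r$ large enough (specifically, $r>\max_{n<d}(|a_n|/|a_d|)^{1/(d-n)}$), the maximum is attained at the top index, giving $|f|(r)=|a_d|r^d$. Taking $c=|a_d|$ yields the desired inequality for all sufficiently large $r$.

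For $(ii)\Rightarrow(i)$, I would write $f(x)=\sum_{n\ge 0}a_n x^n$. By the very definition of the maximum modulus, for every integer $n\ge 0$ and every $r>0$ one has $|a_n|r^n\le |f|(r)$. Combining this with the hypothesis $|f|(r)\le cr^{d}$ valid for all $r$ sufficiently large, we obtain $|a_n|\le c\, r^{d-n}$ for every such $r$. If $n>d$, then $r^{d-n}\to 0$ as $r\to+\infty$, which forces $|a_n|=0$, hence $a_n=0$. Therefore $f(x)=\sum_{n=0}^{d}a_n x^n$ is a polynomial of degree at most $d$.

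The argument is entirely routine; there is no real obstacle. The only point worth being explicit about is that "$|f|(r)\le cr^d$ as $r\to+\infty$" is interpreted as an inequality holding for all $r$ larger than some threshold, which is exactly what is needed to let $r$ tend to infinity in the estimate $|a_n|\le c\,r^{d-n}$.
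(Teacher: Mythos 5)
Your proof is correct. The forward direction $(i)\Rightarrow(ii)$ is essentially identical to the paper's: both observe that for $r$ large enough the maximum $\max_{0\le n\le d}|a_n|r^n$ is attained at the leading term, so $|f|(r)=|a_d|r^d$. For the reverse direction $(ii)\Rightarrow(i)$ you take a slightly different and in fact more direct route: you use the defining identity $|f|(r)=\max_{n}|a_n|r^n$ to get $|a_n|\le c\,r^{d-n}$ and let $r\to+\infty$ to kill every coefficient with $n>d$. The paper instead passes through the derivative estimate $|f^{(n)}|(r)\le |f|(r)/r^{n}$ and concludes $f^{(n)}\equiv 0$ for $n>d$. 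The two arguments are equivalent in substance (the derivative bound is itself a consequence of the coefficient inequality you use), but yours avoids invoking any auxiliary fact about derivatives and works purely at the level of the power-series coefficients, which is arguably cleaner. The only cosmetic caveat is that $d$ in the statement is a real number $\ge 0$, not necessarily an integer, so the conclusion should read ``$a_n=0$ for every integer $n>d$, hence $f$ is a polynomial of degree at most $\lfloor d\rfloor$''; writing $f(x)=\sum_{n=0}^{d}a_nx^n$ implicitly treats $d$ as an integer, but this does not affect the validity of the argument.
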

%%%%%%%%%%%%%%%%%%%%%%%%%%%
\begin{proof}
Suppose that $f$ is a polynomial of degree $n$: $ f (x) = a_{0}+ ...+ a_{n}x^{n}, a_{n} \not=0$. \\
Since, for $r$ large enough, we have $|f|(r)=|a_{n}|r^{n}$, just take
$c=|a_{n}|$ and $d=n$ to have the desired inequality. \\
Conversely, if there are $c>0$ and $d\geq0$ such that $|f|(r)\leq cr^{d}$ as $r\to+\infty$, we have for every integer $n> d $: \ 
$|f^{(n)}|(r)\leq{|f|(r)}/{r^{n}}\leq cr^{d-n} \longrightarrow0.$ \ 
So $f^{(n)}\equiv0$ and $f$ is a polynomial.
\end{proof}
%%%%%%%%%%%%%%%%%%%%%%%%%%%%%%%%%%%

%%%%%%%%%%%%%%%%%%%%%%%%%%%%%%%%%%ù	
\begin{lemma}
Let $f\in \mathcal{A}(\mathbb{C}_{p})$ and suppose that there exist real numbers $\alpha,\beta>0$ and an integer $n\geq2$ such that, 
$|f|(\alpha r^{n})<\beta (|f|(r))^{n},$ when $r\to+\infty$. \
Then $f$ is a polynomial. 
\end{lemma}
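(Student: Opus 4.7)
The strategy is to convert the hypothesis into a polynomial upper bound for $|f|(r)$ and then invoke Lemma 2.20. One may assume $f\not\equiv 0$, in which case $|f|(r)>0$ for every $r>0$ and $r\mapsto|f|(r)$ is non-decreasing on $(0,+\infty)$.

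First I would fix $r_0>0$ large enough so that the hypothesis $|f|(\alpha r^n)<\beta(|f|(r))^n$ holds for all $r\ge r_0$, and also so that $r_0>\alpha^{-1/(n-1)}$. Defining recursively $r_{k+1}=\alpha r_k^n$, an immediate induction on $k$ yields
$$r_k=\alpha^{(n^k-1)/(n-1)}\,r_0^{n^k}\quad\text{and}\quad |f|(r_k)<\beta^{(n^k-1)/(n-1)}\,|f|(r_0)^{n^k}.$$

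Taking logarithms and setting $A=\log|f|(r_0)$, $B=\log\beta/(n-1)$, $C=\log r_0+\log\alpha/(n-1)$, $D=\log\alpha/(n-1)$, the previous relations become $\log r_k=n^kC-D$ and $\log|f|(r_k)<n^k(A+B)-B$. The choice of $r_0$ ensures $C>0$, so one can solve $n^k=(\log r_k+D)/C$ and substitute into the estimate for $\log|f|(r_k)$ to obtain constants $K,K'$ (depending only on $r_0,\alpha,\beta,|f|(r_0)$, not on $k$) such that $|f|(r_k)\le e^{K'}r_k^{K}$ for every $k\ge 0$.

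Finally I would extend this bound to all sufficiently large $r$. Given $r\ge r_1$, pick the unique $k\ge 1$ with $r_{k-1}\le r\le r_k$. The monotonicity of $|f|(\cdot)$ gives $|f|(r)\le|f|(r_k)\le e^{K'}r_k^{K}$, while $r_k=\alpha r_{k-1}^n\le\alpha r^n$ gives $r_k^{K}\le\alpha^{K}r^{nK}$. Hence $|f|(r)\le e^{K'}\alpha^{K}r^{nK}$ for all $r\ge r_1$, and Lemma 2.20 concludes that $f$ is a polynomial. The only delicate point is this last interpolation step: the iteration controls $|f|$ only on the sparse sequence $(r_k)$, so one must combine monotonicity of $|f|$ with the polynomial relation $r_k\le\alpha r^n$ for $r\ge r_{k-1}$ in order to recover a polynomial bound at every sufficiently large $r$.
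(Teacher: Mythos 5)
Your argument is correct and follows the same skeleton as the paper's proof: both iterate $r_{k+1}=\alpha r_k^{n}$ starting from a suitably large $r_0>\alpha^{-1/(n-1)}$, establish a polynomial bound for $|f|$ along the sequence $(r_k)$, and then extend that bound to all large $r$ before invoking the growth criterion (what you call Lemma 2.20 is Lemma 3.2 of the paper). The difference lies in the two half-steps. For the bound along $(r_k)$, the paper chooses constants $c,d$ satisfying $|f|(r_0)<cr_0^{d}$ and $\beta<\alpha^{d}/c^{n-1}$ so that the induction $|f|(r_k)<cr_k^{d}$ closes, whereas you solve the recursion in closed form and eliminate $n^{k}$ by hand; these are equivalent in substance. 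For the interpolation, the paper uses the logarithmic convexity of $r\mapsto|f|(r)$ (writing $r=r_k^{t}r_{k+1}^{1-t}$) to keep the \emph{same} exponent $d$ on all of $[r_k,r_{k+1}]$, while you use only the monotonicity of $|f|(\cdot)$ together with $r_k=\alpha r_{k-1}^{n}\le\alpha r^{n}$, accepting the worse exponent $nK$. Your version is more elementary (it does not need the convexity of $\log|f|$ in $\log r$) and the loss in the exponent is harmless since any polynomial bound suffices. Two small points you should make explicit: (i) the inequality $r_k^{K}\le\alpha^{K}r^{nK}$ and the applicability of Lemma 3.2 both require $K\ge0$; this does hold, because if $A+B<0$ your estimate would force $|f|(r_k)\to0$, contradicting $|f|(r_k)\ge|f|(r_0)>0$, but it deserves a sentence; and (ii) you should note that $C>0$ guarantees $r_k\to+\infty$, so the intervals $[r_{k-1},r_k]$ really do cover a neighbourhood of $+\infty$.
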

%%%%%%%%%%%%%%%%%%%%%%%%%%%%%%%%%%
\begin{proof}
We will construct real numbers $c, d>0$ such that  $|f|(r)\leq cr^{d}$,   $r\to+\infty$.
Let $r_0$ be a real number such that  $r_{0}>\max \{1,\alpha^{{1}/({1-n})}\}.$ 
\ Let us choose  $c, \ d>0$ such that   
$\left\{
\begin{array}{ll}
|f|(r_{0})<cr_{0}^{d}\\ 
\beta<{\alpha^{d}}/{c^{n-1}}\\
\end{array} \right.$,  \  \
and let us  set $r_{k+1}=\alpha r_{k}^{n}$ for $k\geq0$.\\ The sequence $(r_{k})$ est strictly increasing and  tends to $+\infty$. \ We prove by induction that:
$|f|(r_{k})<cr_{k}^{d}, \,\,\forall k \geq0$. \
Indeed, the property holds  for $k=0$. \
Suppose that  it holds for some integer $k\ge0$.
Then we have:
 
\noindent$|f|(r_{k+1})=|f|(\alpha r_{k}^{n})<\beta(|f|(r_{k}))^{n}<\beta c^{n}r_{k}^{dn}=c(\beta c^{n-1})r_{k}^{dn}$

\qquad\quad $<c\alpha^{d}r_{k}^{dn}=c(\alpha r_{k}^{n})^{d}=c r_{k+1}^{d}$,

\noindent which means that this inequality is true for $k+1$.

\medskip

For every $r\in [r_{k},r_{k+1}]$, there exists $t\in[0; \ 1]$ such that  $r=r_{k}^{t} r_{k+1}^{1-t}$. \  As   $\log r \mapsto \log|f|(r)$ is a convex function, we deduce that:\

\medskip

$\begin{array}{cl}
|f|(r)&\leq(|f|(r_{k}))^{t}(|f|(r_{k+1}))^{1-t}=(|f|(r_{k}))^{t}(|f|(\alpha r_{k}^{n}))^{1-t} \\\\
&< (|f|(r_{k}))^{t}(\beta (|f|(r_{k}))^{n})^{1-t}<(c r_{k}^{d})^{t}(\beta(cr_{k}^{d})^{n})^{1-t}\\\\
&= c^{t}r_{k}^{dt}c^{1-t}(\beta c^{n-1})^{1-t}(r_{k}^{n(1-t)})^{d} < c r_{k}^{dt} \alpha^{d(1-t)}  r_{k}^{n(1-t)d}\\\\
& =c(r_{k}^{t}(\alpha r_{k}^{n})^{1-t})^{d}=c(r_{k}^{t}r_{k+1}^{1-t})^{d}=c r^{d}.
\end{array}$\\

Hence $|f|(r)<cr^{d}, \,\, \forall r\geq r_{0}$. \
By Lemma $3.2$, $f$ is then a polynomial. 
\end{proof}
%%%%%%%%%%%%%%%%%%%%%%%%%%%%%%%%%%%%ù
\medskip
\noindent{\it Proof of Theorem 3.1.}

Let us set $P(x)= a_0+\dots+a_kx^k$, where $a_0, \dots, a_k$ are elements of $\mathbb{C}_{p}$ and $a_k\neq 0$. \ Then: $(f\circ P)(x)=(P\circ f)(x)= a_0+\dots+a_k(f(x))^k$. \ Suppose that $k\geq2$. Then, for  $r>0$, we have $|f|(|P|(r))=|P|(|f|(r)).$ 
\ It follows that,  for  sufficiently large $r>0$, we have $|f|(|a_k|r^k)=|a_k|(|f|(r))^k<2|a_k|(|f|(r))^k$.
\ Lemma 3.3 then implies that $f$ is a polynomial, , which is a contradiction.   Hence, we have $k=1$ and $P$ is of the form $P(x)=ax+b$, where $a, b\in\mathbb{C}_{p}$ and $a\neq0$.
\ Two cases can then arise:

\noindent {\it \ i})\quad $a=1$.  Then we have $b=0$. Indeed suppose that $b\neq0$. Then we have, for every $x\in\mathbb{C}_{p}$, $f(x+b)=f(x)+b$. It follows that $f'(x+b)=f'(x)$, for every $x\in\mathbb{C}_{p}$. Let $\zeta$ be a zero of $f'$ such that $|\zeta|>|b|$. Then $\zeta,  \zeta+b, \zeta+2b, \dots$ are infinitely many zeros of $f'$ included in the disk $d(0; |\zeta|)$, a contradiction. Hence in this case we have $P(x)=x$.

\noindent {\it ii})\quad $a\neq1$.  \   
Let $\sigma$ be the affine application $\sigma(t)=t+{b}({a-1})^{-1}$, hence its inverse$\sigma^{-1}$  is given by  $\sigma^{-1} (t)=t+{b}({1-a})^{-1}$. \ Let $F$ be  the function given by $F=\sigma\circ f\circ\sigma^{-1}$. \ It is easily seen that: \ $F(ax)= aF(x), \ \forall x\in\mathbb{C}_{p}$.
 
\noindent If $F(x)=\sum_{n\geq 0}b_{n}x^{n} $, we have $F(ax)=\sum_{n\geq 0}b_{n}a^{n}x^{n} $. It follows that:

 $\sum_{n\geq 0}(a^{n}-a)b_{n}x^{n}=0$, and hence $b_{0}=0$ and $(a^{n}-a)b_{n+1}=0,\,\, \forall n\geq1$.
 
 \medskip 
 
\noindent Suppose that
there exist two relatively prime integers $m, \ n\geq2$ such that $b_{n+1}\not=0$, and $b_{m+1}\not=0$
we would have $a^{n}=a^{m}=1$, and therefore $a=1$, which excluded.  Hence $F$ has the form $F(X)=
\sum_{k\geq 1}b_{nk+1}x^{nk+1}$, \ where $n$ is the smallest positive integer such that  $a^{n}=1$. \ It follows that:
  
{$f(x)=\sum_{k\geq 1}b_{nk+1}(x+{b}/({a-1}))^{nk+1}+{b}/({1-a}).$} \qquad\qquad\qquad\qquad\qquad \sqw
 
\medskip

\begin{corollary}
Let $f \in \mathcal{A}(\mathbb{C}_{p}) \backslash\mathbb{C}_{p}[X]$, $f(x)=\sum_{n\geq 0}a_{n}x^{n} $. Suppose that there exist two relatively prime integers $m,l$ such that  $a_{m}a_{l}\not=0$. Then the only non-constant polynomial $P$ such that $P\circ f = f\circ P $ is $ P (x) = x $.	
	
\end{corollary}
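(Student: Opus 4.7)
\noindent\textit{Proof plan for Corollary 3.4.}

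The plan is to invoke Theorem 3.1 to classify the admissible polynomials $P$ and then use the coprimality hypothesis on the coefficients of $f$ to eliminate the non-identity case. By Theorem 3.1, any non-constant polynomial $P$ commuting with the transcendental entire $f$ is either $P(x) = x$ (which is the desired conclusion) or of the form $P(x) = ax + b$ with $a$ a primitive $n$-th root of unity; moreover the sub-case $a = 1$, $b \neq 0$ was already eliminated in the proof of Theorem 3.1 part (i), by producing infinitely many zeros of $f'$ in a bounded disk. So it suffices to derive a contradiction under the assumption $a \neq 1$ and $n := \mathrm{ord}(a) \geq 2$.

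To do this I would reuse the central computation from the proof of Theorem 3.1 part (ii). Setting $\sigma(x) = x + b/(a-1)$ and $F = \sigma \circ f \circ \sigma^{-1}$, the functional equation $F(ax) = aF(x)$ holds, and writing $F(x) = \sum_{j \ge 0} b_j x^j$ and comparing coefficients yields $b_j(a^j - a) = 0$. Hence every nonzero $b_j$ satisfies $a^{j-1} = 1$, i.e.\ $j \equiv 1 \pmod n$. Equivalently, the Taylor expansion of $f$ at the fixed point $\xi = -b/(a-1)$ of $P$ is supported on the arithmetic progression $1 + n\mathbb{Z}_{\ge 0}$. In particular, any two supported indices $j, j'$ of $F$ satisfy $a^{\gcd(j-1,\, j'-1)} = 1$; as soon as this gcd equals $1$ we obtain $a = 1$, contradicting $n \ge 2$.

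The last step is to extract from the hypothesis $a_m a_l \neq 0$ (with $m, l$ coprime) a pair $(j, j')$ of nonzero-coefficient indices of $F$ with $\gcd(j-1, j'-1) = 1$. In the easy sub-case $b = 0$ we have $\xi = 0$ and $F = f$, so $a_m = b_m$ and $a_l = b_l$; the argument above then applies with $(j, j') = (m, l)$ and the coprimality of the shifted indices (the natural shift forced by the relation $a^{j-1} = 1$) yields $a = 1$ at once. I expect the main obstacle to be the general case $b \neq 0$: the translation $\sigma$ mixes the $a_m$ with the $b_j$ through the binomial expansion of $\bigl(x + b/(a-1)\bigr)^{nk+1}$, so the support restriction on the $b_j$ does not transfer verbatim to the $a_m$. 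To handle this I would work directly with the explicit form $f(x) = \sum_k b_{nk+1}\bigl(x + b/(a-1)\bigr)^{nk+1} + b/(1-a)$, using the binomial identities to show that two coprime-indexed nonzero $a_m$'s force a corresponding coprime pair of supported indices on the $F$-side, thereby completing the contradiction.
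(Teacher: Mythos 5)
Your overall route (reduce via Theorem 3.1 to $P(x)=ax+b$ with $a$ of order $n\geq 2$, conjugate by $\sigma(t)=t+b/(a-1)$, and read off the support condition $b_j\neq 0\Rightarrow a^{j-1}=1$) is exactly the derivation the paper intends, since the corollary is stated without proof as a consequence of the final form of $f$ obtained at the end of the proof of Theorem 3.1. But the step you present as immediate in the sub-case $b=0$ is precisely where the argument breaks. From $a_m a_l\neq 0$ you only get $a^{m-1}=a^{l-1}=1$, hence $a^{\gcd(m-1,l-1)}=1$, and $\gcd(m,l)=1$ does not imply $\gcd(m-1,l-1)=1$ (take $m=3$, $l=5$). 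This is not a notational slip that can be patched: any odd transcendental entire $f$ commutes with $P(x)=-x$ and has $a_3a_5\neq 0$ with $3$ and $5$ coprime, so under the hypothesis as written no argument can close this step. What actually drives the mechanism (and what the paper itself invokes inside the proof of Theorem 3.1) is nonvanishing of the coefficients with \emph{shifted} coprime exponents, i.e. $b_{m+1}b_{l+1}\neq 0$ with $\gcd(m,l)=1$; your parenthetical ``coprimality of the shifted indices'' silently identifies $\gcd(m,l)=1$ with $\gcd(m-1,l-1)=1$, which is the gap.

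The second gap, which you correctly flag as the main obstacle but leave as a hope, is in fact unbridgeable along the lines you sketch. When $b\neq 0$ the relevant expansion is the one at the fixed point of $P$, and no support restriction survives translation back to the origin: take $F$ odd, transcendental and entire, $c\neq 0$, $f(x)=F(x+c)-c$ and $P(x)=-x-2c$; then $P\circ f=f\circ P$ while, generically, every Taylor coefficient $a_j$ of $f$ at $0$ with $j\geq 1$ is nonzero, so coprime (indeed shifted-coprime) pairs $m,l$ with $a_ma_l\neq 0$ abound. Hence the binomial manipulations cannot transfer the hypothesis on the $a_m$ into a coprime pair of supported indices of $F$, because the implication you are trying to prove fails in this generality; the statement is only coherent when the coefficient hypothesis is read on the expansion at the fixed point (the $b_j$), with the shift taken into account, which is the form appearing inside the paper's proof of Theorem 3.1. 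In short: the reduction to Theorem 3.1 is right, but the arithmetic extraction of $a=1$ from the hypothesis is wrong in the $b=0$ case and cannot be carried out at all in the $b\neq 0$ case.
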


%%%%%%%%%%%%%%%%%%%%%%%%%%%%%%%%%%%%

Bilal SAOUDI 

Laboratoire de Math\'ematiques Pures et Appliqu\'ees,

  Universit\'e de Jijel ,
  
   BP 98, Jijel, Alg\'erie. 
  
 Email:saoudibilal1@gmail.com \\

Abdelbaki BOUTABAA

Universit\'e Clermont Auvergne LMBP UMR 6620 - CNRS

Campus universitaire des Cézeaux
 
3, Place Vasarely

TSA 60026

CS 60026

63178, Aubière-Cedex; France

Email: Abdelbaki.Boutabaa@math.univ-bpclermont.fr\\

Tahar Zerzaihi

Laboratoire de Math\'ematiques Pures et Appliqu\'ees,

Universit\'e de Jijel ,

 BP 98, Jijel, Alg\'erie.

Email:zerzaihi@yahoo.com \\

\label{lastpage}

\end{document}